\theoremstyle{plain}
\newtheorem{theorem}{Theorem}
\newtheorem{lemma}{Lemma}
\newtheorem{definition}{Definition}
\theoremstyle{remark}
\begin{document}

\title{Notes on planar semimodular lattices. I. Construction} 
\author{G. Gr\"{a}tzer} 
\address{Department of Mathematics\\
   University of Manitoba\\
   Winnipeg, MB R3T 2N2\\
   Canada}
\email[G. Gr\"atzer]{gratzer@ms.umanitoba.ca}
\urladdr[G. Gr\"atzer]{http://server.math.umanitoba.ca/homepages/gratzer/}
\thanks{The research of the first author was supported by the NSERC of Canada.}

\author{E. Knapp}
\address{University of Manitoba\\
   Winnipeg, MB R3T 2N2\\
   Canada}
\email[E. Knapp]{edward.m.knapp@gmail.com}
\date{Nov. 27, 2006; revised Feb. 10, 2007}
\keywords{Semimodular lattice, planar, distributive, modular.}
\subjclass[2000]{Primary: 06C10; Secondary: 06D05}
\begin{abstract}
We construct all planar semimodular lattices in three simple steps from the direct product of two chains.
\end{abstract}

\maketitle

\section{Introduction}\label{S:1}
It is part of the folklore of lattice theory that a planar distributive lattice $D$ is a cover-preserving sublattice of a direct product of two finite chains; in fact, it is the direct product with the two ``corners'' removed (a corner may have any number of elements), as illustrated by the lattice $S$ in Fig\emph{}ure~\ref{Fi:distrandmod}. It is also known that we obtain a planar modular lattice $S^+$ from a planar distributive lattice $S$ by adding ``eyes'' to covering squares---making covering squares into covering~$M_n$-s, see the lattice $S^+$ in Figure~\ref{Fi:distrandmod} for an example; the elements of~$S$ in~$S^+$ are black-filled.

\begin{figure}[htb]
\centerline{\includegraphics{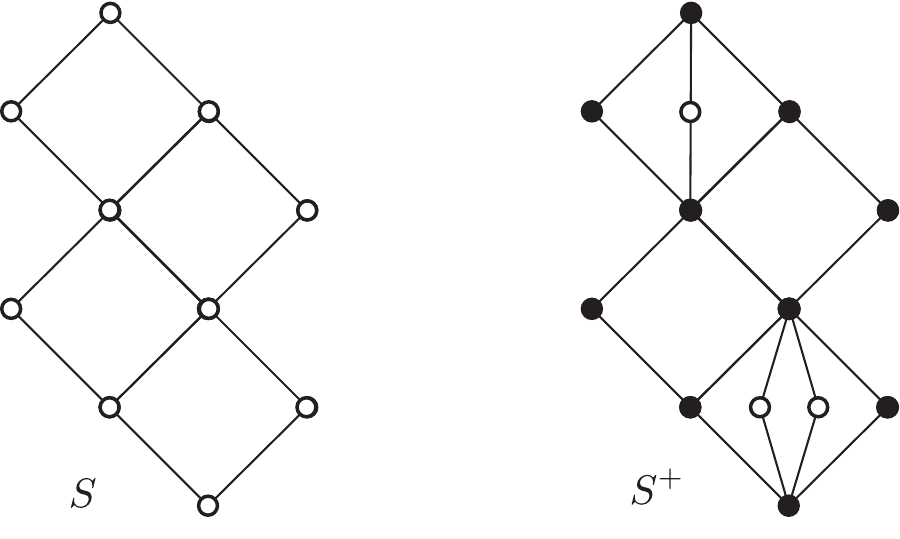}}
\caption{A planar distributive and a planar modular lattice.}\label{Fi:distrandmod}
\end{figure}

A typical example of a planar semimodular lattice $S_7$ is shown in Figure~\ref{Fi:semimod}. There is no obvious way to connect it with a planar distributive lattice. However, we can play with $S_7$ the same game we played before with $S$---adding ``eyes''---and obtain the second lattice $S_7^+$ of Figure~\ref{Fi:semimod}; again, a planar semimodular lattice.

\begin{figure}[htb]
\centerline{\includegraphics{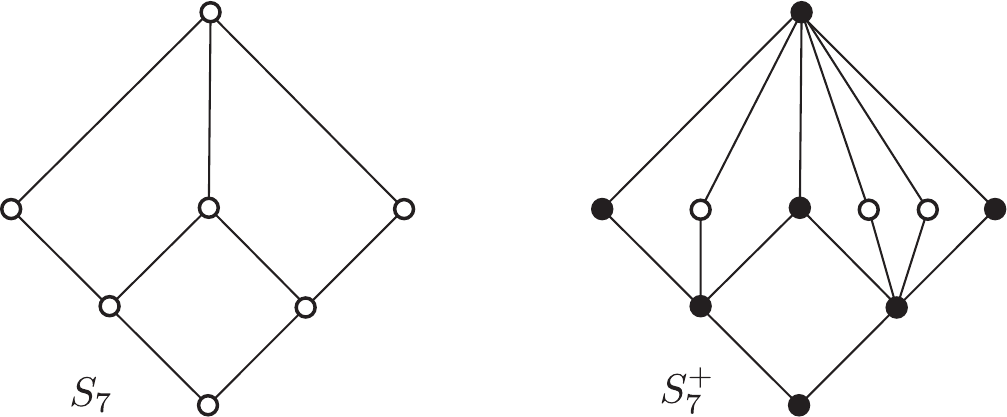}}
\caption{Two planar semimodular lattices.}\label{Fi:semimod}
\end{figure}

In this paper, it is our goal to describe how to construct all planar semimodular lattices.

\subsection*{Acknowledgment} We would like to extend our thanks to David Kelly for his comments on the presentation of this result.

\subsection*{Notation} We follow the notation and terminology of the book \cite{CLFL}; see also\\
\ttt{http://www.maths.umanitoba.ca/homepages/gratzer.html/}\\
and click on \ttt{Notation}.

For a lattice $L$ and elements $a$, $b \in L$, we use the notation $a \preceq b$ for $a \prec b$ or $a  =  b$.

\section{Corners}\label{S:corners}
Let $C$ \tup{(}with zero $0_C$ and unit $1_C$\tup{)} and $D$ \tup{(}with zero $0_D$ and unit $1_D$\tup{)} be finite chains. A \emph{left corner} of $A  =  C \times D$ is defined as follows.  A \emph{left $1$-corner} of $L$ is $\set{\vv<1_C, 0_D>}$ where $L$ is the left boundary chain of $A$. Removing this element, we get the lattice $A_1$. Obviously, $A_1$ is a planar distributive lattice. Let $L_1$ be the left boundary chain of $A_1$.

Having defined a planar distributive lattice $A_{n-1}$ with left boundary chain $L_{n-1}$, pick a doubly irreducible element $a$ of $L_{n-1}$. Define $A_{n}  =  A_{n-1} - \set{a}$ with left boundary chain $L_{n}$. The corresponding left corner is $A - A_n$.

A left corner is an $A - A_n$, for some $n$.

We define right corners similarly.

Now we can restate the folklore result on planar distributive lattices.

\begin{lemma}\label{L:planardistr}
A finite, planar, distributive lattice can be obtained from the direct product of two finite chains by removing a left and a right corner.
\end{lemma}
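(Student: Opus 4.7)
The plan is to show that a finite planar distributive lattice $D$ embeds cover-preservingly into a product of two finite chains in such a way that the omitted pairs split into a left corner and a right corner.

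First, I would invoke the fundamental theorem $D \cong \operatorname{Down}(J(D))$, where $J(D)$ is the poset of join-irreducibles. The key input from planarity is the classical fact that in a planar embedding of $D$, every join-irreducible lies either on the left boundary chain or on the right boundary chain. Let $J_L$ and $J_R$ be these two sets; they are disjoint, each a chain in the induced order, and $J(D) = J_L \sqcup J_R$ as underlying sets (though cross-comparabilities between $J_L$ and $J_R$ are allowed in the poset $J(D)$).

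Second, let $C = J_L \cup \{0_C\}$ and $E = J_R \cup \{0_E\}$ be these chains augmented with a new zero, and define $\varphi \colon D \to C \times E$ by
\[
    \varphi(x) = \bigl(\max\{j \in J_L : j \le x\},\ \max\{k \in J_R : k \le x\}\bigr),
\]
taking an empty maximum to be the added zero. Since $x = \bigvee\{j \in J(D) : j \le x\}$, the map $\varphi$ is injective; preservation of joins, meets, and covers follows routinely from the chain structure of $J_L$ and $J_R$ together with the observation that every cover $x \prec y$ has $y = x \vee j$ for a unique $j \in J(D)$, which must be the immediate successor in $J_L$ (or $J_R$) of the corresponding coordinate of $\varphi(x)$, thereby advancing exactly one coordinate by one step. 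Moreover, $(\alpha, \beta) \in \varphi(D)$ precisely when $\{j \in J_L : j \le \alpha\} \cup \{k \in J_R : k \le \beta\}$ is a downset of $J(D)$, and the pairs failing this condition split by the direction of the violation into a top-left cluster and a bottom-right cluster.

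The main obstacle is verifying that these two clusters literally match the definition of a left corner and a right corner given in Section~\ref{S:corners}, that is, that each can be stripped off by iteratively removing doubly irreducible elements of the current left, respectively right, boundary chain. I would proceed by induction on the size of the cluster: at each stage the outer boundary of the forbidden region forms a staircase, and its topmost step on the appropriate side is a single element whose surviving upper and lower covers both lie on the current boundary chain, making it doubly irreducible there. The chain structure of $J_L$ and $J_R$ is precisely what guarantees the staircase has an unambiguous outer corner to peel at each stage, and iterating this peeling on each side exhibits $C \times E \setminus \varphi(D)$ as the union of a left corner and a right corner.
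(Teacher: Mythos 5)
The paper does not actually prove this lemma: it is stated as a folklore result, with the reader referred to Kelly and Rival \cite{KR75} for the details. So there is no in-text proof to compare against; what you give is the standard argument, and its outline is sound: represent $D$ as the down-set lattice of $J(D)$, split $J(D)$ into two chains, embed $D$ cover-preservingly into the product of the two corresponding chains via the coordinatewise maxima, and identify the complement of the image with two staircase regions according to the direction in which down-closure fails. Your implicit claim that the two violation regions are disjoint is correct and worth making explicit: if a pair violated down-closure in both directions, the chain structure of $J_L$ and $J_R$ would force a cycle $j < j' < k' < k < j$ in $J(D)$.

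Two points need tightening. First, the ``classical fact'' you invoke---that in a planar diagram every join-irreducible lies on the left or right boundary chain---is true for planar distributive (indeed slim semimodular) lattices, but it is stronger than what you need and is not something you can simply wave at; moreover, as stated, $J_L$ and $J_R$ need not be disjoint, since the two boundary chains can share elements (for instance when $D$ is a chain). The genuinely classical input is that a finite distributive lattice is planar if and only if $J(D)$ contains no three-element antichain; Dilworth's theorem then splits $J(D)$ into two chains with no reference to any diagram, and your map $\varphi$ works verbatim. Second, the peeling induction at the end is only sketched: to match the definition of a corner in Section~\ref{S:corners} you must check that the first element removed is exactly $\langle 1_C, 0_D\rangle$ (it does lie in the corresponding violation region whenever that region is nonempty), and that at each later stage the extreme element of the staircase you remove has exactly one remaining lower cover and one remaining upper cover and lies on the current boundary chain. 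Both verifications are routine, so with these repairs your proposal is a correct proof of the lemma the paper leaves to the literature.
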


For more detail on this process, see D. Kelly and I. Rival \cite{KR75}.

\section{Slimming}\label{S:slim}
Let $L$ be a planar lattice (by definition, a planar lattice is finite). Let 
\[
   \set{o, a, b, c, i}  =  M_3
\]
be a cover-preserving sublattice of $S$, with zero $o$, unit $i$, atoms $a$ to the left of $b$, to the left of $c$. Then $L_1  =  L - \set{b}$ is a sublattice of $L$, a \emph{$1$-step slimming} of~$L$. Obviously, $L_1$ is also a planar lattice. In general, an \emph{$n$-step slimming} of $L$ is a $1$-step slimming of an $n - 1$-step slimming of $L$; a \emph{slimming} is an $n$-step slimming, for some $n$. In Figure~\ref{Fi:semimod}, the lattice $S_7$ is a $3$-step slimming of the lattice~$S_7^+$.

We call a planar lattice \emph{slim}, if it has no $1$-step slimming; equivalently, every covering square is an interval. In Figure~\ref{Fi:semimod}, the lattice $A$ is slim, the lattice~$B$ is~not.

The following observation is trivial:

\begin{lemma}\label{L:slimfat}
Let $L$ be a planar lattice and let $\ol{L}$ be a slimming of $L$. Then $L$ is semimodular if{}f $\ol{L}$ is semimodular.
\end{lemma}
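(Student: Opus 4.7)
The plan is to reduce, by induction on the number of slimming steps, to the case of a single $1$-step slimming $\ol{L} = L - \set{b}$, where $\set{o,a,b,c,i}$ is a cover-preserving $M_3$ in $L$ with $b$ the middle atom. The first step is a structural observation: $\ol{L}$ is a $\set{\wedge,\vee}$-sublattice of $L$, and the covering relation of $\ol{L}$ agrees with that of $L$ restricted to $\ol{L}$. Sublattice closure holds because the cover-preserving $M_3$ forces the only candidates for elements meeting or joining to $b$ to lie in $\set{o,b,i}$, which is already closed in $\ol{L}$. Cover equivalence holds because any ``new'' cover in $\ol{L}$ would have to leap over $b$ in $L$, forcing its endpoints to be $o$ and $i$; but $a \in \ol{L}$ already sits strictly between them.

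Given this observation, the forward direction is immediate: a semimodularity witness $x \wedge y \prec_{\ol{L}} x$ with $x,y \in \ol{L}$ lifts verbatim to $L$ with the same meet, join, and covering relations, so semimodularity of $L$ produces $y \prec_L x \vee y$, which descends back to $\ol{L}$.

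For the reverse direction, I would take $x,y \in L$ with $x \wedge y \prec_L x$ and derive $y \prec_L x \vee y$ by case analysis on whether $b \in \set{x,y}$. If $b \notin \set{x,y}$, the witness transfers to $\ol{L}$ and semimodularity there gives $y \prec_{\ol{L}} x \vee y$, which lifts to $L$ because any hypothetical element strictly between $y$ and $x \vee y$ in $L$ but not in $\ol{L}$ would have to equal $b$; then $y \leq o$ and $x \vee y \geq i$, so $a$ sits strictly between them, contradicting the $\ol{L}$-cover. If $b = x$ (or symmetrically $b = y$), then $b \wedge y \prec b$ forces $b \wedge y = o$, and I would apply semimodularity of $\ol{L}$ either to the pair $(a,y)$ or to $(i,y)$, selected according to whether $a \leq y$ or not; the $M_3$ relations $a \vee b = b \vee c = i$ and $a \wedge b = b \wedge c = o$ let me identify the relevant joins and meets between $L$ and $\ol{L}$.

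The main obstacle is this last substitution step: choosing the correct replacement for $b$ and verifying that the joins and meets of the substituted element with $y$ match those of $b$ with $y$ in the way needed to translate the $L$-conclusion to (and from) an $\ol{L}$-conclusion. Since the lemma is labeled ``trivial,'' I expect the dichotomy $a \leq y$ versus $a \not\leq y$ to resolve each subcase cleanly from the $M_3$ relations alone, with planarity of $L$ playing no further role beyond guaranteeing that $b$ is doubly irreducible so that $\ol{L}$ is indeed a lattice.
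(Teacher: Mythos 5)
Your forward direction, and the reverse-direction case $b \nin \set{x,y}$, are fine; but the case $b \in \set{x,y}$ --- exactly the step you flag as ``the main obstacle'' --- is a genuine gap, and your closing remark that planarity plays no role beyond the double irreducibility of $b$ is wrong. In the subcase $x=b$, $y \parallel b$, you get $b \mm y = o$ and must prove $y \prec y \jj b$, where $y \jj b = y \jj i$ because $i$ is the unique cover of $b$. If $a \leq y$ (or $c \leq y$), your plan works: then $i \mm y = a \prec i$ and semimodularity of $\ol{L}$ applied to $(i,y)$ gives the cover, which lifts. But if $a \nleq y$ and $c \nleq y$, applying semimodularity of $\ol{L}$ to $(a,y)$ or $(c,y)$ only yields $y \prec y \jj a$ and $y \prec y \jj c$, while $(i,y)$ is unusable since $i \mm y \prec i$ is not available; nothing so far prevents $y \jj i$ from lying two covers above $y$, which is what must be excluded. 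This cannot be repaired from the $M_3$ identities, double irreducibility of $b$, and semimodularity of $\ol{L}$ alone: let $\ol{L}$ be the Boolean lattice of subsets of $\set{1,2,3}$ and insert a new element $b$ with $\es \prec b \prec \set{1,2}$. Then $\set{\es, \set{1}, b, \set{2}, \set{1,2}}$ is a cover-preserving $M_3$, $b$ is doubly irreducible, $L-\set{b}$ is distributive (hence semimodular), and the sublattice/cover-agreement facts you establish all hold --- yet $L$ is not semimodular, since $b \mm \set{3} = \es \prec b$ while $\set{3} \jj b = \set{1,2,3}$ does not cover $\set{3}$. Of course this $L$ is not planar, which is precisely the point: the reverse implication needs the planar geometry substantively (for instance, fixing a maximal chain through $o \prec b \prec i$, placing $y$ on one side of it, and using a left--right/chain-crossing argument to show $y \jj b$ equals $y \jj a$ or $y \jj c$; alternatively one can route the argument through the $4$-cell characterization of semimodularity of planar lattices in the spirit of Lemma~\ref{L:4cell2}).

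A second, smaller issue: the two subcases $b = x$ and $b = y$ are not symmetric in the condition ``$x \mm y \prec x$ implies $y \prec x \jj y$.'' When $y = b$, the hypothesis is $x \mm b \prec x$ (equivalently $x \mm o \prec x$ for $x \parallel b$) and the conclusion $b \prec x \jj b$ forces $x \jj b = i$, i.e., $x \leq i$; the same Boolean example (take $x = \set{3}$) shows this also fails without planarity, so it needs its own planar argument rather than ``or symmetrically.'' For comparison: the paper records this lemma as a trivial observation and supplies no proof, so there is nothing to measure your route against; but as written, your reduction does not close in these cases.
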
 

With this terminology, we can recast the folklore result as follows:

\begin{lemma}\label{L:planarmod}
Let $L$ be a planar modular lattice. If $L$ is slim, then it is distributive.
\end{lemma}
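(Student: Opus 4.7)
My plan is to argue the contrapositive: assume $L$ is planar, modular, and slim but not distributive, and derive a contradiction. Since $L$ is modular, the classical Dedekind--Birkhoff characterization says that failure of distributivity is equivalent to the existence of a sublattice isomorphic to $M_3$; so I pick an $M_3$-sublattice $\{o,a,b,c,i\}$ of $L$. The goal is to upgrade this $M_3$ to a \emph{cover-preserving} $M_3$, because then---by the definition of a $1$-step slimming given in Section~\ref{S:slim}---deleting the middle atom exhibits a $1$-step slimming of $L$, contradicting slimness.

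To perform the upgrade, I would choose among all $M_3$-sublattices of $L$ one whose interval $[o,i]$ has the smallest possible length. The Isomorphism Theorem for modular lattices, applied to each complementary pair among $a,b,c$ via the maps $x\mapsto x\vee b$, $x\mapsto x\vee c$, and $x\mapsto x\vee a$, gives $[o,a]\cong[b,i]$, $[o,b]\cong[a,i]$, and $[o,c]\cong[a,i]$, so all six of the intervals $[o,a]$, $[o,b]$, $[o,c]$, $[a,i]$, $[b,i]$, $[c,i]$ share a common length $k$.

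The heart of the argument is to show $k=1$. Suppose $k\ge 2$ and pick an atom $p$ of $L$ with $o\prec p\le a$. Modularity gives $(p\vee b)\wedge a=p\vee(b\wedge a)=p$, and similarly $(p\vee c)\wedge a=p$, while $(p\vee b)\vee a=(p\vee c)\vee a=i$; the isomorphism $[o,a]\cong[b,i]$ also forces $b\prec p\vee b$. With some additional care---refining $p\vee b$ and $p\vee c$ downward using modularity so that their pairwise meet becomes exactly $p$---one expects to produce a fresh $M_3$-sublattice inside the strictly shorter interval $[p,i]$, contradicting minimality. This meet refinement is the step I expect to be the main obstacle: the naive candidate $\{p,a,p\vee b,p\vee c,i\}$ need not satisfy $(p\vee b)\wedge(p\vee c)=p$, and the standard remedy is iterated application of modularity combined with a perspectivity argument rooted at the cover $b\prec p\vee b$.

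Once $k=1$ is established, the sublattice $\{o,a,b,c,i\}$ is cover-preserving. Planarity of $L$ linearly orders the three pairwise incomparable atoms $a,b,c$ from left to right, so after relabelling we are precisely in the configuration of the definition of a $1$-step slimming. Deleting the middle atom then yields a genuine $1$-step slimming of $L$, contradicting the assumption that $L$ is slim, so $L$ must have been distributive after all.
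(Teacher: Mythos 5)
The paper itself offers no argument for Lemma~\ref{L:planarmod}---it is stated as a recasting of a folklore result---so there is no in-paper proof to match yours against; what you propose is the standard route, via the classical theorem that a modular nondistributive lattice of finite length contains a cover-preserving $M_3$ (see \cite{GLT2}). Your framing is sound: Birkhoff's characterization yields an $M_3$-sublattice $\set{o,a,b,c,i}$, all six intervals $[o,a],\dots,[c,i]$ have a common length $k$ by perspectivity (your bookkeeping of which map $x\mapsto x\vee(\cdot)$ gives which isomorphism is slightly scrambled, but harmless), and a cover-preserving $M_3$ does contradict slimness: $\set{o,a,c,i}$ is then a covering square that is not an interval, or equivalently, using the left-to-right order of the atoms supplied by planarity, deleting the middle atom is a $1$-step slimming.

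The genuine gap is that the crux---showing a length-minimal $M_3$ is cover-preserving, i.e.\ $k=1$---is exactly the step you leave open, and your proposed remedy (``refine $p\vee b$ and $p\vee c$ downward so that their meet becomes $p$'') points in the wrong direction. The standard repair raises the bottom instead: with $o\prec p<a$, set $q=(p\vee b)\wedge(p\vee c)$ and check that $\set{q,\ a\vee q,\ p\vee b,\ p\vee c,\ i}$ is an $M_3$ spanning $[q,i]$. Modularity gives $a\wedge(p\vee b)=p\vee(a\wedge b)=p$, hence $(a\vee q)\wedge(p\vee b)=q\vee\bigl(a\wedge(p\vee b)\bigr)=q$, and symmetrically for $p\vee c$; all pairwise joins contain $a\vee b=b\vee c=i$. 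Nondegeneracy: $p\vee b=i$ would give $[o,p]\cong[b,i]$ (since $p\wedge b=o$), i.e.\ $1=k\geq 2$, so $p\vee b, p\vee c<i$; then $p\vee b\neq p\vee c$ (a common value would lie above $b\vee c=i$), so $q<p\vee b$ and $q<p\vee c$; $a\leq q$ would force $i=a\vee b\leq p\vee b$; and $a\vee q=i$ would force $q=(a\vee q)\wedge(p\vee b)=p\vee b$. Since $q\geq p>o$, the interval $[q,i]$ is strictly shorter than $[o,i]$, contradicting minimality; so $o\prec a,b,c$, and by the equal-length observation $a,b,c\prec i$ as well. With this paragraph supplied---or simply with a citation of the cover-preserving $M_3$ theorem from \cite{GLT2}---your argument is complete; note also that planarity enters only through the definition of slimness, so what you are really proving is the general statement that a finite modular lattice with no cover-preserving $M_3$ is distributive.
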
 

\section{$4$-cell lattices}\label{S:3}

It seems beneficial to look at planar semimodular lattices \emph{via} their cell structure. Cells were introduced in O. Ore \cite{oO43}; see also S. MacLane \cite{sM43}.

\begin{definition}\label{D:cell}
A \emph{cell} $A$ in a planar lattice consists of two maximal chains $C$ \tup{(}with zero $0_C$ and unit $1_C$\tup{)} and $D$ \tup{(}with zero $0_D$ and unit $1_D$\tup{)} such that the following conditions hold:
\begin{enumeratei}
\item $0_C  =  0_D$ and $1_C  =  1_D$;
\item every $x \in C-\set{0_C, 1_C}$ is to the left of every $y \in D - \set{0_D, 1_D}$;
\item there are no elements inside the region bounded by $C$ and $D$.
\end{enumeratei}
We call $C$ the \emph{left chain} and $D$ the \emph{right chain} of the cell $A$.
\end{definition}

 A \emph{$4$-cell} is a cell with $|C|  =  |D|  =  3$. A \emph{$4$-cell lattice} is a lattice in which all cells are $4$-cells.

\begin{lemma}\label{L:4cell1}
A planar semimodular lattice is a $4$-cell lattice.
\end{lemma}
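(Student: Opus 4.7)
The strategy is to use semimodularity to produce a common upper cover of the two lowest interior elements on the boundary chains, and then exploit the cell conditions (ii) and (iii) to force that cover to be the top $1$ of the cell, which collapses both chains to length $2$. The driving tension is that semimodularity wants to introduce a new element above $c_1$ and $d_1$, but (iii) forbids anything new in the interior of the cell, so the only place for the join to land is $1$.

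Write $C: 0 = c_0 \prec c_1 \prec \cdots \prec c_m = 1$ and $D: 0 = d_0 \prec d_1 \prec \cdots \prec d_n = 1$, and assume the cell is non-degenerate, so $m, n \geq 2$. By (ii) the elements $c_1$ and $d_1$ are incomparable, and both cover $0$, so semimodularity forces $c_1 \vee d_1$ to cover each of them. The join lies in $[0,1]$; using planarity I would argue it must sit inside the closed cell region, and then condition (iii) places it on $C \cup D$. Since it covers $c_1$, the only candidate in $C$ is $c_2$, and symmetrically in $D$ it must be $d_2$. In the case $c_1 \vee d_1 = c_2$, if $c_2 \neq 1$ then $c_2$ is interior to $C$ and satisfies $c_2 \geq d_1$ with $d_1$ interior to $D$, contradicting the incomparability required by (ii); hence $c_2 = 1$ and $m = 2$. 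Combining $d_1 \prec c_1 \vee d_1 = 1$ with $d_1 \prec d_2 \leq 1$ then forces $d_2 = 1$, so $n = 2$. The case $c_1 \vee d_1 = d_2$ is symmetric.

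The main obstacle is the planarity step that locates $c_1 \vee d_1$ inside the closed cell region, hence on $C \cup D$. A priori the join could be realized by an element outside the cell region (to the left of $C$ or to the right of $D$), and excluding this requires the geometric fact that in a planar lattice the least upper bound of an element on the left boundary and an element on the right boundary of a Jordan region is caught inside that region. Once this localization is granted, the remainder of the argument is a short case analysis driven by (ii) and the cover structure of the two chains.
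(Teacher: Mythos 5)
Your argument is correct, and it is worth saying up front that the paper offers no argument at all for this lemma: its proof is the single word ``Obvious.'' So rather than diverging from the paper, you are supplying the details it suppresses, and your skeleton is the standard one: semimodularity makes $c_1\vee d_1$ cover both $c_1$ and $d_1$, planarity traps this join in the closed cell region, condition (iii) then forces it onto $C\cup D$, and the covering relations collapse both boundary chains to length $2$. The one step you leave open --- localizing $c_1\vee d_1$ in the closed region --- is not a new ``geometric fact'' you would have to invent (and stated for arbitrary Jordan regions it is looser than what you need); it is exactly the chain-separation property of planar lattices from the paper's reference \cite{KR75}: if two comparable elements lie on opposite sides of a maximal chain, the chain contains an element between them. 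Concretely, extend $C$ to a maximal chain $C'$ of $L$; if $c_1\vee d_1$ were strictly on the left of $C'$, then since $d_1$ is on its right (by (ii)) and $d_1< c_1\vee d_1\le 1_A$, the lemma gives $e\in C'$ with $d_1\le e< 1_A$ and $e>0_A$, hence $e\in C-\{0_A,1_A\}$ comparable with $d_1$, contradicting (ii); symmetrically the join is not strictly to the right of $D$, so it lies in the closed region and, the interior being empty, on $C\cup D$, after which your case analysis goes through. Your standing assumption $m,n\ge 2$ is also harmless: if one chain is $\{0_A,1_A\}$, then $0_A\prec 1_A$ forces the other chain to coincide with it, so the cell is degenerate. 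In short: correct, and more of a proof than the paper gives; to make it fully rigorous you would only need to quote the Kelly--Rival lemma at the flagged step instead of gesturing at the Jordan-region picture.
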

\begin{proof}
Obvious.
\end{proof}

For a cell $A$, let $0_A$ and $1_A$ denote the zero and unit of $A$, respectively.

\begin{lemma}\label{L:4cell2}
Let $L$ be a $4$-cell lattice. Then $L$ is semimodular if{}f for the cells $A$ and $B$, if $0_A  =  0_B$, then $1_A  =  1_B$.
\end{lemma}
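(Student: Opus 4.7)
The two directions have rather different flavors, and I would prove them separately.

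\emph{Sufficiency.} Assume that any two $4$-cells with a common bottom have a common top, and take $a \prec b, c$ with $b \neq c$; the goal is $b, c \prec b \vee c$. In the planar embedding the upper covers of $a$ form a left-to-right chain $a_{1}, \ldots, a_{k}$, with $b = a_{i}$ and $c = a_{j}$ for some $i < j$. Since $L$ is a $4$-cell lattice, every adjacent pair $(a_{\ell}, a_{\ell+1})$ is the bottom of a $4$-cell whose top is $a_{\ell} \vee a_{\ell+1}$; by hypothesis all these $4$-cells share a single top $t$, so in particular $a_{\ell} \prec t$ for every $\ell$. Then $b \vee c$ lies in $[a_{i}, t] = \{a_{i}, t\}$, and $b \vee c = a_{i} = b$ would force $c \leq b$, impossible for two distinct covers of $a$. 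Hence $b \vee c = t$ and $b, c \prec b \vee c$.

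\emph{Necessity.} Assume $L$ is semimodular, let $A, B$ be $4$-cells with $0_{A} = 0_{B} = o$, and reduce (by transitivity along adjacent cells) to the case where $A$ and $B$ share an atom: $A = \{o, x, y, t_{A}\}$ and $B = \{o, y, z, t_{B}\}$ with $x, y, z$ three consecutive covers of $o$ in left-to-right order. Suppose for contradiction $t_{A} \neq t_{B}$. Semimodularity applied to $y \prec t_{A}, t_{B}$ yields $t_{A}, t_{B} \prec s := t_{A} \vee t_{B}$, while semimodularity applied to $o \prec x, z$ produces $u := x \vee z$ which covers both $x$ and $z$; clearly $u \leq s$.

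A short case check rules out $u \in \{y, t_{A}, t_{B}, s\}$---for example $u = s$ would make $s$ cover $x$ directly, contradicting the saturated chain $x \prec t_{A} \prec s$---so $u$ is a new element of $[o, s]$. Using semimodularity one then verifies that the eight-element subset $P = \{o, x, y, z, t_{A}, t_{B}, u, s\}$ of $L$ has, as a subposet, exactly these cover relations: $o \prec x, y, z$; $x \prec t_{A}, u$; $y \prec t_{A}, t_{B}$; $z \prec t_{B}, u$; and $t_{A}, t_{B}, u \prec s$. This is the Hasse diagram of $B_{3}$, the eight-element Boolean lattice. Restricting the upward planar embedding of $L$ to $P$ would then give an upward planar drawing of $B_{3}$, contradicting the well-known fact that $B_{3}$ is not a planar lattice.

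The step I expect to be the main obstacle is verifying that $u$ really is a fresh element and that the induced covers on $P$ are precisely those of $B_{3}$, with no collapsing. An essentially equivalent finish, perhaps cleaner inside the paper's framework, avoids $B_{3}$ altogether: once $u \leq s$ and $u \notin \{y, t_{A}, t_{B}, s\}$ are in place, the planar position of $u$ is forced to lie strictly inside the region of the candidate $4$-cell $\{y, t_{A}, t_{B}, s\}$ produced by semimodularity, directly violating condition~(iii) of Definition~\ref{D:cell}.
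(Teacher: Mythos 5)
Your sufficiency argument is essentially the paper's: both rest on the facts that consecutive upper covers of the common zero $o$ span a $4$-cell with zero $o$, and that by hypothesis all such cells share a single unit $t$, whence $b \vee c = t$ covers both $b$ and $c$. The necessity direction is where you diverge substantially, and not to your advantage. The paper disposes of it in two lines: with $a_1$ the left atom of $A$ and $b_2$ the right atom of $B$, semimodularity makes $a_1 \vee b_2$ a cover of both $a_1$ and $b_2$, and planarity puts $1_A$ and $1_B$ below $a_1 \vee b_2$, forcing $1_A = a_1 \vee b_2 = 1_B$. You instead reduce to adjacent cells, manufacture $u = x \vee z$, and assemble an eight-element subposet isomorphic to $B_3$.

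That route can be made to work, but the step you yourself flag is genuinely broken as stated: restricting a planar diagram of $L$ to the subposet $P$ does not produce a diagram of $P$, because covers of the subposet (for instance $u \prec_P s$) need not be covers of $L$, so the edges of $P$'s Hasse diagram are simply not drawn. To repair it you must either invoke the theorem (see Kelly and Rival \cite{KR75}) that a finite lattice is planar iff its order dimension is at most $2$ --- dimension passes to subposets and $\dim B_3 = 3$ --- or run a region argument directly. The latter in fact collapses your whole construction: the maximal chains $o \prec x \prec u$ and $o \prec z \prec u$ of the interval $[o,u]$ bound a region containing $y$, so planarity gives $y \leq u$, hence $u \geq (x \vee y) \vee (y \vee z) = t_A \vee t_B = s$, contradicting $u \neq s$ without any mention of $B_3$. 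Either way, the paper's one-step planarity argument is the intended shortcut.
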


\begin{proof}
Let $L$ be a finite planar $4$-cell lattice with cells $A$ and $B$. Let $a_1$ and $a_2$ be the atoms of $A$, from left to right, and let $b_1$ and $b_2$ be the atoms of $B$, from left to right. Let $A$ be to the left of $B$ (that is, $a_1$ is to the left of $b_2$).

If $L$ is semimodular, then $a_1 \jj b_2$ is of height $2$; because of planarity, we conclude that $a_1 \jj b_2  =  1_A  =  1_B$.

Conversely, assume that for any two cells, $A$ and $B$, if $0_A  =  0_B$, then $1_A  =  1_B$. To verify that $L$ is semimodular, let $a$, $b$, $o \in L$, $o \prec a$, $o \prec b$, and $a \neq b$; we have to show that $a \prec a \jj b$.

Without loss of generality, we can assume that $a$ is to the left of $b$. Consider a cell $X$ with zero $o$ containing $a$ on its left chain; there is such a cell because $b$ is to the right of $a$. 

If $b$ is on the right chain of $X$, then $a \jj b  =  1_X$ and $a \prec 1_X$, since $X$ is a $4$-cell. If $b$ is not on the right chain of $X$, let $a_1$ be the element on the right chain of~$X$ covering $o$. Symmetrically let $Y$ be a $4$-cell with $b$ on the right chain of $Y$. Let $b_1\parallel b$ covering $o$ be on the left chain of $Y$. By our assumption, $1_X=1_Y$ and therefore $a\prec 1_X= a\jj b$.
\end{proof}

In light of Lemma~\ref{L:4cell2}, the following configuration is crucial for our investigations:

\begin{definition}\label{D:sideAdj}
Let $L$ be a $4$-cell lattice. The cells $A$ and $B$ in $L$ are \emph{upper-adjacent}, if $1_A  =  1_B$ and there exists $u\in A$, $B$ such that $1_B=1_A\succ u\succ 1_A$, $1_B$.
\end{definition}

In other words, the cells $A$ and $B$ are upper-adjacent, if $A \ii B  =  \set{1_A  =  1_B, u}$, where $u$ is common atom of $A$ and $B$, as illustrated in Figure~\ref{Fi:uapair}. If we denote by $U$ these  cells, then $1_U$ stands for $1_A  =  1_B$.

\begin{figure}[htb]
\centerline{\includegraphics{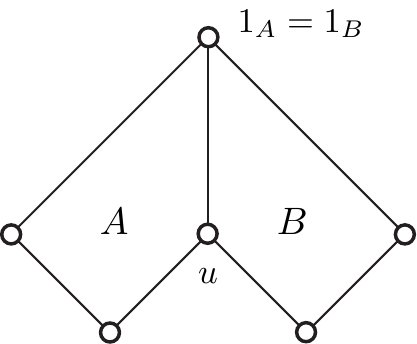}}
\caption{Upper-adjacent cells.}\label{Fi:uapair}
\end{figure}

\begin{lemma}\label{L:sideAdj}
Let $L$ be a planar semimodular lattice. Then $L$ is modular if{}f there are no upper-adjacent cells.
\end{lemma}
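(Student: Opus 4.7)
The plan is to prove both directions by contrapositive, exploiting the cell-structure lemmas already established.

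For the forward direction, I will assume cells $A$ and $B$ are upper-adjacent---atoms $a_1, u$ in $A$, atoms $u, b_2$ in $B$, common top $1 = 1_A = 1_B$, distinct bottoms $0_A \neq 0_B$, as depicted in Figure~\ref{Fi:uapair}---and exhibit a copy of the pentagon $N_5$ in $L$. My candidate is
\[
   \set{0_A \ii b_2,\; 0_A,\; a_1,\; 1,\; b_2},
\]
with chain $0_A \ii b_2 < 0_A \prec a_1 \prec 1$ and side element $b_2$. Three calculations do the work. First, $b_2 \jj 0_A = 1$: since $b_2 \geq 0_B$ one has $b_2 \jj 0_A \geq 0_B \jj 0_A = u$, and then $b_2 \jj 0_A \geq b_2 \jj u = 1$. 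Second, $a_1 \jj b_2 = 1$ follows at once. Third, $a_1 \ii b_2 = 0_A \ii b_2$: since $a_1 \not\leq b_2$ (otherwise $u \leq a_1 \leq b_2$ would contradict $u,b_2$ being distinct coatoms of~$1$) and $0_A \prec a_1$ inside cell $A$, every element below both $a_1$ and $b_2$ lies below $0_A$, which gives $a_1 \ii b_2 \leq 0_A \ii b_2$; the reverse inequality is clear. Strictness of $0_A \ii b_2 < 0_A$ comes from the dual observation that $0_A \leq b_2$ would force $u \leq b_2$. Thus $N_5$ embeds in $L$, contradicting modularity.

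For the converse I will slim $L$ and identify the result as distributive. Let $\ol{L}$ be a slimming of $L$; by Lemma~\ref{L:slimfat}, $\ol{L}$ is planar semimodular. The key structural observation is that in any $4$-cell lattice with no upper-adjacent cells, the cells lying below a common top $t$ share a single bottom $d_t$: consecutive cells below $t$ in the planar left-to-right arrangement share an atom, so the no-upper-adjacent hypothesis forces them to share a bottom as well, and iterating gives $[d_t, t] \cong M_k$. Slimming collapses each such $M_k$ to its unique outer $4$-cell, so in $\ol{L}$ there is at most one cell with any given top; dually, by Lemma~\ref{L:4cell2}, at most one cell with any given bottom. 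Hence every element of $\ol{L}$ has at most two upper covers and at most two lower covers, which is precisely the cell-incidence pattern of a planar distributive lattice recalled in Section~\ref{S:corners} (Lemma~\ref{L:planardistr}). So $\ol{L}$ is distributive, and therefore modular.

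To finish the converse I invoke that modularity transfers from $\ol{L}$ back to $L$---equivalently, that reversing the slimming by adding ``eyes'' to covering squares carries planar modular to planar modular, as the paper's introduction already notes for the passage $S \mapsto S^+$. The main obstacle is the structural step: verifying that slimness combined with the no-upper-adjacent condition forces precisely the cell pattern of a planar distributive lattice, which is where Lemma~\ref{L:planardistr} does the real work.
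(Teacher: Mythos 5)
The converse direction (``no upper-adjacent cells implies modular'') has a genuine gap at its central step. After slimming you conclude that every element of $\overline{L}$ has at most two upper and at most two lower covers, and then assert that this ``is precisely the cell-incidence pattern of a planar distributive lattice,'' hence that $\overline{L}$ is distributive, citing Lemma~\ref{L:planardistr}. That lemma runs in the wrong direction: it describes lattices already known to be distributive and gives no criterion for recognizing distributivity (or even modularity) from a cover-counting pattern. The implication ``slim, planar, semimodular, at most two lower covers everywhere $\Rightarrow$ distributive'' is essentially the theorem you are trying to prove (by Lemma~\ref{L:4cell5}, having at most two lower covers everywhere is equivalent, in a slim semimodular lattice, to having no upper-adjacent pairs), so as written the argument is circular. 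The paper avoids this entirely by arguing the contrapositive directly: if $L$ is not modular then, being finite and semimodular, it fails lower semimodularity, so the dual of Lemma~\ref{L:4cell2} produces cells $A$, $B$ with $1_A=1_B$ and $0_A\neq 0_B$; if these are not already upper-adjacent, an upper-adjacent pair is manufactured among the cells under $1_B$. You also lean on two unproved transfers: that slimming preserves the absence of upper-adjacent pairs, and that modularity passes from $\overline{L}$ back to $L$ (Lemma~\ref{L:slimfat} covers only semimodularity; the modular transfer is true but needs the dual argument).

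Your forward direction is correct in outcome but has a soft spot: to get $a_1\wedge b_2=0_A\wedge b_2$ you need $a_1\wedge b_2\le 0_A$, and ``$0_A\prec a_1$ inside cell $A$'' does not deliver this, because $a_1$ may have lower covers outside the cell. The rescue is planarity: $u$ lies between $a_1$ and $b_2$ among the lower covers of $1_U$, so any element below both $a_1$ and $b_2$ lies below $u$, whence $a_1\wedge b_2\le a_1\wedge u=0_A$. (Also, your parenthetical ``$u\le a_1\le b_2$'' is garbled; $a_1\not\le b_2$ simply because $a_1$ and $b_2$ are distinct lower covers of $1_U$.) In any case this direction is immediate from the dual of Lemma~\ref{L:4cell2}: upper-adjacency gives $1_A=1_B$ with $0_A\neq 0_B$, so $L$ is not lower semimodular, hence not modular; the explicit $N_5$ is more work than the statement requires.
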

\begin{proof}
Let $L$ be a planar semimodular lattice. By Lemma~\ref{L:4cell1}, $L$ is a $4$-cell lattice. Let us assume that $L$ is nonmodular, that is, $L$ is not lower semimodular. By the dual of Lemma~\ref{L:4cell2}, there are cells $A$ and $B$, with $A$ to the left of $B$, satisfying $1_A  =  1_B$ and $0_A \neq 0_B$. Let $\set{0_A, a, 1_A}$ be the left chain of $A$, let $\set{0_B, b, 1_B}$ be the right chain of $B$, and let $\set{0_B, u, 1_B}$ be the left chain of $B$. If $A$ and $B$ are upper-adjacent, then we are done. Otherwise, if $A\ii B = \set{1_A = 1_B}$, then let $c\prec 1_B$ be the rightmost element to the left of $u$ such that $0_B\not\prec c$. Let $v\prec 1_B$ be to the leftmost element to the right of $c$. Define $d \prec 1_B$ to be the leftmost element to the right of $v$. Now we define two cells: $C$, the cell formed by $\set{1_B, v, c, v \mm c}$ and  $D$, the cell formed by $\set{1_B, v, d, v \mm d}$. Then $C$ and $D$ are upper-adjacent.
\end{proof}

\begin{lemma}\label{L:4cell3}
Let $L$ be a slim $4$-cell lattice. Then $L$ is semimodular if{}f $0_A  =  0_B$ implies that $A  =  B$, for cells $A$ and $B$ of $L$.
\end{lemma}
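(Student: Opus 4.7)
The strategy is to leverage Lemma~\ref{L:4cell2} for both directions, and to use slimness in one direction to upgrade the conclusion ``$1_A=1_B$'' to ``$A=B$.''

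For the ``if'' direction, assume that $0_A=0_B$ implies $A=B$ for cells of $L$. Then in particular $1_A=1_B$ whenever $0_A=0_B$, so Lemma~\ref{L:4cell2} yields that $L$ is semimodular. No use of slimness is required here.

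For the ``only if'' direction, suppose $L$ is slim and semimodular, and let $A$, $B$ be cells with $0_A=0_B$. By Lemma~\ref{L:4cell2}, $1_A=1_B$; write $o=0_A=0_B$ and $i=1_A=1_B$. Suppose for contradiction that $A\neq B$. Each $4$-cell has exactly two atoms (one on each of its two chains), so the atoms of $A$ and of $B$ give at least three pairwise distinct elements of $[o,i]$ that cover $o$. The plan is now to exhibit a covering square inside $[o,i]$ that is properly contained in $[o,i]$, contradicting slimness. Pick any two distinct atoms $x, y$ among these; since $x,y\prec i$ and $x\neq y$ both cover $o$, we get $x\mm y=o$ and $x\jj y=i$ (using that $[o,i]$ has rank two, as it contains the rank-two interval of the $4$-cell $A$). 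Thus $\set{o,x,y,i}$ is a covering square. The remaining third atom lies in $[o,i]\setminus\set{o,x,y,i}$, so this covering square is a proper subset of $[o,i]$ and hence is not an interval, contradicting slimness.

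The only non-routine point is noticing that distinctness of $A$ and $B$ forces at least three atoms in the common interval $[o,i]$, and that any two of these, together with $o$ and $i$, form a covering square whose failure to be an interval directly contradicts the slimness hypothesis. Everything else is a bookkeeping reduction to Lemma~\ref{L:4cell2}.
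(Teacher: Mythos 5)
Your proof is correct and takes essentially the same route as the paper: both directions reduce to Lemma~\ref{L:4cell2}, and slimness is used exactly as there, by observing that two distinct cells with common zero (hence, by that lemma, common unit) would force a third atom in $[o,i]$, so some covering square is not an interval. One small remark: the parenthetical appeal to $[o,i]$ having rank two is unnecessary (and not really justified as stated), since $x \jj y = i$ and $x \mm y = o$ already follow directly from $o \prec x, y \prec i$ and $x \neq y$.
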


\begin{proof}
This follows immediately from Lemma~\ref{L:4cell2}, which states if $0_A = 0_B$, then $1_A = 1_B$, and from the fact that for two distinct cells $A$ and $B$, there is a doubly irreducible element in the interior of $A\uu B$.
\end{proof}

\begin{lemma}\label{L:4cell4}
Let $L$ be a slim semimodular lattice. Then exactly one of the following holds, for all $x \in L$:
\begin{enumeratei}
\item $x  =  1$;
\item $x$ is covered by a \emph{unique} element \tup{(}denoted by $x^*$\tup{)};
\item $x$ is covered by \emph{exactly two} elements \tup{(}denoted by $x^*_L$ and $x^*_R$, where $x^*_L$ is to the left of $x^*_R$\tup{)}.
\end{enumeratei}
\end{lemma}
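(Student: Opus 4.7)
The plan is to observe that the three alternatives are pairwise exclusive by their statements (case (i) corresponds to $0$ upper covers, (ii) to exactly $1$, (iii) to exactly $2$), and that in a finite lattice $x=1$ is the only element with no cover. So the entire content is: \emph{every $x\neq 1$ has at most two upper covers.} I would prove this by contradiction, exhibiting two distinct cells sharing the same zero, in contradiction with Lemma~\ref{L:4cell3}.

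Suppose $x\in L$, $x\neq 1$, has three or more covers. Using the planar embedding, order the covers of $x$ from left to right, and pick three consecutive ones $u_1$, $u_2$, $u_3$. By Lemma~\ref{L:4cell1}, $L$ is a $4$-cell lattice. Applying semimodularity to the pair $u_1\prec u_1\jj u_2\succ u_2$, the element $u_1\jj u_2$ covers both $u_1$ and $u_2$, so $\{x,u_1,u_2,u_1\jj u_2\}$ is a covering square; by slimness it equals the interval $[x,u_1\jj u_2]$. Because $u_1$ and $u_2$ are \emph{consecutive} among the covers of $x$ in the planar diagram, no other element of $L$ lies in the inner region of this covering square, so it is a $4$-cell $A$ with $0_A=x$. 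The same argument applied to $u_2$ and $u_3$ produces a $4$-cell $B$ with $0_B=x$.

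Now apply Lemma~\ref{L:4cell3}: since $L$ is slim semimodular and $0_A=0_B$, we must have $A=B$. But $u_1\in A$ while $u_1\notin B$ (as $u_1$ lies strictly to the left of the left atom $u_2$ of $B$), a contradiction. Hence $x$ has at most two covers, and labelling them $x^*$ (in case (ii)) or $x^*_L$, $x^*_R$ from left to right (in case (iii)) completes the proof.

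The only delicate step is asserting that the covering square $\{x,u_1,u_2,u_1\jj u_2\}$ is really a \emph{cell}, i.e., that no element of $L$ sits geometrically inside it. This is where the planar embedding and the choice of $u_1,u_2$ as \emph{consecutive} covers are essential: any element in the interior of the region would give $x$ a further cover between $u_1$ and $u_2$, contradicting consecutiveness, once one uses that slimness forces $[x,u_1\jj u_2]$ to be exhausted by the four corners. I expect this geometric planarity check to be the only point that requires care; everything else is a direct application of Lemmas~\ref{L:4cell1} and~\ref{L:4cell3}.
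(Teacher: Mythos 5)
Your proof is correct and takes essentially the same approach as the paper: assuming three (consecutive) covers of $x$, you form the two distinct $4$-cells $A=\set{x,u_1,u_2,u_1\jj u_2}$ and $B=\set{x,u_2,u_3,u_2\jj u_3}$ with $0_A=0_B$ and contradict Lemma~\ref{L:4cell3}. The paper's proof is just terser, asserting these are $4$-cells without the planarity/slimness check of empty interiors that you spell out.
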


\begin{proof}
Let us assume that $x$ is covered by $y$, $z$, $t$, where $y$ is immediately to the left of $z$ and $z$ is immediately to the left of $t$. Then $A  =  \set{x, y, z, y \jj z}$ and $B  =  \set{x, z, t, z \jj t}$ are distinct $4$-cells with $0_A  =  0_B  =  x$. $L$ is not slim by the contrapositive of Lemma~\ref{L:4cell3}.
\end{proof}

If $x$ is covered by a unique element, then $x^*_L$ and $x^*_R$ both are defined as $x^*$. 

\begin{lemma}\label{L:4cell5}
Let $L$ be a slim semimodular lattice. Let $x \in L$. If $a$, $b$, $c \prec x$ are distinct, then there exists an upper-adjacent pair $U$ such that $1_U=x$.
\end{lemma}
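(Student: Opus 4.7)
The plan is to use the three distinct lower covers of $x$ to build two $4$-cells with top $x$ that share a common atom but nothing else, yielding an upper-adjacent pair at $x$. All the work is in finding the right two cells and ruling out that they intersect in more than the required atom and top.

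First I would invoke Lemma~\ref{L:4cell1} to note that $L$, being a planar semimodular lattice, is a $4$-cell lattice. Using planarity, I would linearly order all lower covers of $x$ from left to right. Since $a,b,c\prec x$ are three distinct lower covers, this list has length at least three, and I pick three consecutive entries $x_1,x_2,x_3$ (reindexing $a,b,c$ is not enough in general, because between $a$ and $b$ there might be further lower covers of $x$). The region bounded by the covering pairs $x_1\prec x$ and $x_2\prec x$ contains a unique cell $A$ with $1_A=x$, left chain topped by $x_1$ and right chain topped by $x_2$; as $L$ is a $4$-cell lattice this cell is a $4$-cell, so $A=\{x_1\mm x_2,x_1,x_2,x\}$ with $x_1\mm x_2\prec x_1$ and $x_1\mm x_2\prec x_2$. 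Symmetrically, between $x_2$ and $x_3$ I obtain $B=\{x_2\mm x_3,x_2,x_3,x\}$.

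Now $1_A=1_B=x$, and $x_2$ is a common atom of $A$ and $B$. To conclude that $A$ and $B$ form an upper-adjacent pair, I need $A\ii B=\set{x,x_2}$. The only conceivable extra coincidence is $x_1\mm x_2=x_2\mm x_3$, since the other elements of $A$ and $B$ sit at the wrong heights or on the wrong sides to coincide. But if $x_1\mm x_2=x_2\mm x_3$, this element would be covered by the three distinct elements $x_1$, $x_2$, $x_3$, directly contradicting Lemma~\ref{L:4cell4}. Hence $A$ and $B$ are upper-adjacent and $1_U=x$ for $U=\{A,B\}$.

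The main obstacle is the setup step rather than the finish: one has to argue that consecutive lower covers of $x$ really do bound a $4$-cell in the claimed form. This is essentially a planarity bookkeeping issue, resolved by combining the definition of a cell (Definition~\ref{D:cell}) with Lemma~\ref{L:4cell1}. Once the two $4$-cells are in hand, the clinching application of Lemma~\ref{L:4cell4} to their (possibly equal) bottoms is immediate.
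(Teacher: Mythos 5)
Your proof is correct and follows essentially the same route as the paper: pick three consecutive lower covers of $x$, form the two $4$-cells $\set{x,a,b,a\mm b}$ and $\set{x,b,c,b\mm c}$, and observe they constitute an upper-adjacent pair with unit $x$. Your extra care in passing to consecutive lower covers and in ruling out $a\mm b=b\mm c$ via Lemma~\ref{L:4cell4} only fills in details the paper leaves implicit.
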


\begin{proof}
Let $a$, $b$, $c\prec x$ with $a$ immediately to the left of $b$ and $b$ immediately to the left of $c$. Define the cells $A=\set{x,a,b,a\mm b}$ and $B=\set{x,b,c,b \mm c}$. Then $U=A \uu B$ is an upper-adjacent pair with $1_U=x$.
\end{proof}

\begin{definition}
Let $L$ be a slim semimodular lattice and let $U$ be an upper-adjacent pair in $L$. Let us call $U$ \emph{maximal}, if $1_V > 1_U$ fails, for any upper-adjacent pair $V$ in $L$.  
\end{definition}

$U$ may not be unique as demonstrated in Figure~\ref{Fi:twomaximalua}.

\begin{figure}[htb]
\centerline{\includegraphics{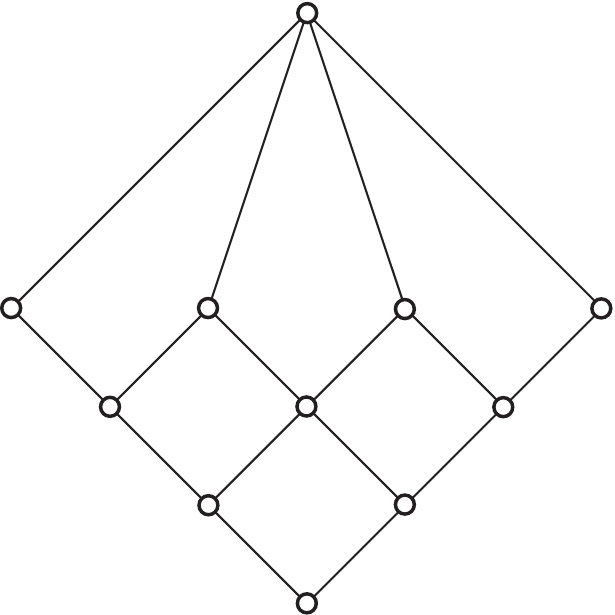}}
\caption{A lattice with two upper-adjacent maximal pairs.}\label{Fi:twomaximalua}
\end{figure}

\section{More on upper-adjacent cells}\label{S:upperadjacent}
In the next few sections we deal with a slim semimodular lattice $L$. A lattice is planar if it has a planar diagram. For $L$, we also fix a planar diagram representing it. For any sublattice $K$ of $L$, we have a planar diagram representing it, the one we obtain from the planar diagram of $L$. When we speak of $K$, we really have this planar representation in mind.

Let $U$ be a pair of maximal upper-adjacent $4$-cells in $L$, labeled as in Figure~\ref{Fi:uapair-labeled}. We~associate with $U$ two chains:

\begin{figure}[htb]
\centerline{\includegraphics{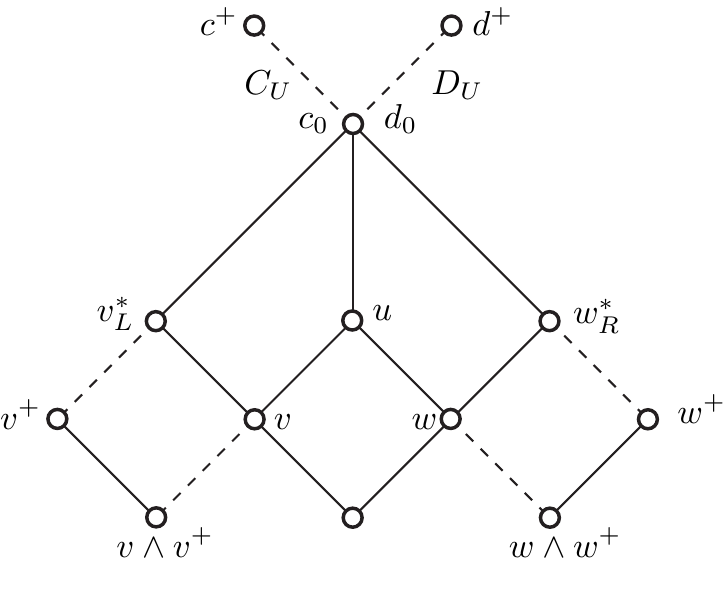}}
\caption{The labeled upper-adjacent pair in $L$.}\label{Fi:uapair-labeled}
\end{figure}

\begin{definition}\label{D:leftchain}
The \emph{left-chain $C_U$ associated with $U$} is a maximal chain 
\[
    c_0 \prec c_1 \prec \dots \prec c_k
\]
in $L$ such that $c_0=1_U$ and $C_U$ has the following two properties, for every $1 \leq i < k$:
\begin{enumeratei}
\item $c_i  =  (c_{i-1})^*_L$;
\item there is a $4$-cell $C^U_i  =  \set{x_{i-1}, x_i, c_{i-1}, c_i}$ with zero $x_{i-1}$, unit $c_i$, with $x_i$ to the left of $c_{i-1}$.
\end{enumeratei}
\end{definition}

Obviously, $c_0 \in C_U$. The chain terminates with $c_k$, if $c_k =1$ or if $c_k <1$ but $c_k$ covers no element to the left of $c_{k-1}$. Intuitively $c_k$ is on the left boundary and $c_i$ is not on the left boundary for all $i<k$. 

Similarly, we define the \emph{right-chain $D_U$ associated with $U$}: 
\[
    d_0 \prec d_1 \prec \dots \prec d_l,
\]
and the $4$-cell $D^U_i$.

Of course, in general, $k$ and $l$ are distinct, and later it will be shown that the sets $C_U-\set{c_0}$ and $D_U-\set{d_0}$ are distinct.  

Notice to what extent $C_U$ and $D_U$ depend not only on the structure of $L$ but on the planar representation we fixed.

\begin{lemma}\label{L:intchains}
Let $L$ be a slim semimodular lattice and let $U$ be a \emph{maximal} upper-adjacent pair in $L$. Then the chains $C_U$ and $D_U$ are intervals given by $C_U=[c_0,c_k],D_U=[d_0,d_l]$.
\end{lemma}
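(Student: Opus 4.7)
My plan is to prove $C_U = [c_0, c_k]$; the argument for $D_U = [d_0, d_l]$ is symmetric. The maximality of $U$ together with Lemma~\ref{L:4cell5} constrains every $x > c_0 = 1_U$ to have at most two lower covers in $L$, since a third would make $x = 1_V$ for some upper-adjacent $V$ with $1_V > 1_U$. Applied to $x = c_i$ with $i \ge 1$, this pins down the lower covers of $c_i$ as exactly the two atoms of the $4$-cell $C^U_i$, namely $x_i$ (left) and $c_{i-1}$ (right).

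The technical heart of the proof is an induction on $j$ showing $c_0 \jj x_j = c_j$ for $0 \le j \le k$. The base uses $x_0 \prec c_0$, so $c_0 \jj x_0 = c_0$. For the step, the induction hypothesis gives $c_0 \jj x_j \ge c_0 \jj x_{j-1} = c_{j-1}$ and we trivially have $c_0 \jj x_j \ge x_j$, while $c_0 \jj x_j \le c_j$ follows from $c_0 \le c_j$ and $x_j \le c_j$. Therefore $c_0 \jj x_j$ lies in $[c_{j-1}, c_j] \ii [x_j, c_j] = \set{c_{j-1}, c_j} \ii \set{x_j, c_j}$. Since $c_{j-1}$ and $x_j$ are distinct elements of the $4$-cell $C^U_j$, this intersection is $\set{c_j}$, so $c_0 \jj x_j = c_j$. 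A direct consequence is $c_0 \not\le x_j$ for every $j \ge 1$.

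With these tools, the claim $[c_0, c_i] = \set{c_0, \dots, c_i}$ falls out by a second induction on $i$. The base $i = 0$ is trivial. In the step, take any $y \in [c_0, c_i]$ with $y < c_i$ and pick a lower cover $z$ of $c_i$ with $y \le z$; the first observation above forces $z \in \set{x_i, c_{i-1}}$. The case $z = x_i$ would give $c_0 \le y \le x_i$, contradicting the identity $c_0 \jj x_j = c_j$ at $j = i$; hence $z = c_{i-1}$ and $y \in [c_0, c_{i-1}] = \set{c_0, \dots, c_{i-1}}$ by the induction hypothesis.

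The main obstacle, I expect, is recognizing that the ``shadow chain'' $x_0 \prec x_1 \prec \dots$ faithfully tracks $C_U$ one step to the left via $c_0 \jj x_j = c_j$; once this identity is available, the rest is bookkeeping on top of the two-cover constraint enforced by the maximality of $U$.
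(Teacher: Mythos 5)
Your proof is correct, and its engine is the same as the paper's: the maximality of $U$ combined with Lemma~\ref{L:4cell5} forbids a third lower cover of $c_i$ for $i\geq 1$, so the only lower covers of $c_i$ are the two atoms $x_i$ and $c_{i-1}$ of the $4$-cell $C^U_i$, and one then only has to rule out that an element of $[c_0,c_i]$ can sit below $x_i$. Where you genuinely diverge is in how that last point is handled. The paper dispatches it in one clause, asserting that any lower cover $x\geq c_0$ of $c_i$ with $x\neq c_{i-1}$ must lie to the \emph{right} of $c_{i-1}$ --- an appeal to the planar representation that is not really argued (a priori $x$ could be $x_i$, which lies to the left), and which is close to circular since ``nothing to the left of $C_U$ is above $c_0$'' is essentially what the lemma asserts. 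You instead prove $c_0\not\leq x_j$ outright via the induction $c_0\jj x_j=c_j$, squeezing the join into $[c_{j-1},c_j]\ii[x_j,c_j]=\set{c_j}$. This buys you a purely order-theoretic argument that never mentions left or right, and it closes what is arguably a gap in the published proof; the paper's version is shorter but leans on an unstated planarity fact. Your final induction showing $[c_0,c_i]=\set{c_0,\dots,c_i}$ is the routine bookkeeping the paper leaves implicit. The one caveat for both arguments is that they need the $4$-cell $C^U_i$ (hence $x_i$) to exist for $i=k$ as well; the paper's Definition~\ref{D:leftchain} literally only posits it for $i<k$, but the proof of Lemma~\ref{L:sidetops} shows the intended reading is $1\leq i\leq k$, so this is a defect of the definition's phrasing rather than of your proof.
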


\begin{proof}
We proceed in hope of a contradiction. Let $x\prec c_i$ for some $i$ such that $x\geq c_0$ and $x\neq c_{i-1}$. Since $x\neq c_{i-1}$ we have that $x$ is to the right of $c_{i-1}$. By the construction of $C_U$ there exists some $y\prec c_i$ to the left of $c_{i-1}$. Since we have distinct elements $x,y,c_{i-1}\prec c_i$ by Lemma~\ref{L:4cell5} there exists an upper adjacent pair with unit $c_i$ which contradicts the statement $U$ is maximal.
\end{proof}

\begin{lemma}\label{L:sidetops}
Let $L$ be a slim semimodular lattice and let $U$ be a \emph{maximal} upper-adjacent pair in $L$. Then $I$ has the following property, for each $c_i$ where $0<i\leq k$ we have that if $x$ is the rightmost element satisfying $x\prec c_i$ then $x=c_{i-1}$. By symmetry the same holds for $d_i$ where $0< i\leq l$, that is $d_{i-1}$ is the leftmost element covered by $d_i$.
\end{lemma}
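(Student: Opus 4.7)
The plan is to argue by contradiction: if some element strictly to the right of $c_{i-1}$ were covered by $c_i$, then $c_i$ would have three distinct lower covers, and Lemma~\ref{L:4cell5} would produce an upper-adjacent pair sitting strictly above $U$, contradicting the maximality of $U$.

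Concretely, I would fix $i$ with $0 < i \leq k$ and recall from clause~(ii) of Definition~\ref{D:leftchain} that the $4$-cell $C^U_i$ supplies an element $x_i \prec c_i$ lying to the left of $c_{i-1}$; thus $c_i$ already covers the two distinct elements $x_i$ and $c_{i-1}$. Suppose, for contradiction, that some $y \prec c_i$ is strictly to the right of $c_{i-1}$. Then $x_i$, $c_{i-1}$, $y$ are pairwise distinct, since their relative left--right positions rule out any coincidence, and all three are covered by $c_i$. Lemma~\ref{L:4cell5} then yields an upper-adjacent pair $V$ with $1_V = c_i$.

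To close, I would invoke the chain $c_0 \prec c_1 \prec \dots \prec c_k$ together with $i \geq 1$ to conclude $1_V = c_i \geq c_1 > c_0 = 1_U$, contradicting the maximality of $U$. Hence no such $y$ exists and $c_{i-1}$ is the rightmost element covered by $c_i$; the corresponding statement for $d_i$ then follows by the mirror-symmetric argument applied to the right-chain $D_U$. The only subtle point anywhere is the genuine distinctness of $x_i$, $c_{i-1}$, and $y$, but that is forced immediately by their left--right positions, so I do not anticipate any real obstacle.
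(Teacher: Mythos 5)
Your proposal is correct and follows essentially the same argument as the paper: assume some $y \prec c_i$ lies to the right of $c_{i-1}$, note that Definition~\ref{D:leftchain}(ii) already supplies a cover of $c_i$ to the left of $c_{i-1}$, then apply Lemma~\ref{L:4cell5} to the three distinct lower covers to get an upper-adjacent pair with unit $c_i > 1_U$, contradicting maximality of $U$.
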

\begin{proof}
Assume the contrary, that there for some $i>0$ we have that $x\prec c_i$ such that $x$ is to the right of $c_{i-1}$. By the definition of $C_U$, there exists $y\prec c_i$ to the left of $c_{i-1}$. We have distinct elements $y$, $c_{i-1}$, $x\prec c_i$ so by Lemma~\ref{L:4cell5} there is an upper-adjacent pair $V$ such that $c_i=1_V>1_U$. Since $U$ is maximal we have a contradiction.
\end{proof}

Next we define some elements of $L$. Let $c^+=c_k$ and $d^+=d_l$; see Figure~\ref{Fi:uapair-labeled}. The elements we next define are crucial in the decomposition of $L$.

\begin{definition}\label{D:vplus}
Define the element $v^+$ as the minimal element on the left boundary of $L$ such that $v^+\not\leq v$.
\end{definition}

\begin{lemma}\label{L:v+unique}
$v^+$ is uniquely defined.
\end{lemma}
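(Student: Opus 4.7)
The plan is to exploit planarity. In a planar lattice, the left boundary is a maximal chain from $0$ to $1$; call it $\ell_0 = 0 \prec \ell_1 \prec \dots \prec \ell_m = 1$. The candidates for $v^+$ are then the elements of $S = \{\ell_i : \ell_i \not\leq v\}$, which is a subset of a finite chain and hence itself linearly ordered.

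Next I would verify that $S$ is nonempty and pick out its unique minimum. From the labeling of $U$ in Figure~\ref{Fi:uapair-labeled}, the element $v$ sits strictly below $1_U \leq 1$, so $v < 1 = \ell_m$ and thus $\ell_m \in S$. A standard chain observation then shows that the set of indices $\{i : \ell_i \not\leq v\}$ is upward-closed in $\{0,1,\dots,m\}$: if $\ell_i \leq v$ and $j < i$, then $\ell_j < \ell_i \leq v$, contradicting $\ell_j \in S$. Hence $S = \{\ell_j, \ell_{j+1}, \dots, \ell_m\}$ for a smallest index $j$, and its unique minimal element is $\ell_j$; this element is $v^+$.

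The lemma is essentially immediate from planarity, and there is no substantive obstacle. The only verification needed is that $v < 1$ (ensuring that $S$ is nonempty), which is built into the labeling conventions of Figure~\ref{Fi:uapair-labeled}. If later uses of $v^+$ require existence under weaker assumptions, one would simply observe that whenever $v \neq 1$, the top element $1$ witnesses $S \neq \emptyset$ and the argument above goes through unchanged.
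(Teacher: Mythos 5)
Your argument is correct and matches the paper's: both rest on the fact that the left boundary is a chain, so the minimal element not below $v$ is unique (the paper phrases it as two minimal such elements being comparable and hence equal), with existence witnessed by an element above $v$ on the boundary. Nothing further is needed.
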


\begin{proof}
$v^+$ is defined since $v\not\leq v^*_L$. Let $x$ and $y$ be two minimal elements satisfying $x \not\leq v$ and $y \not\leq v$ on the left boundary. Since both are on the left boundary we have either $x\leq y$ or $y\leq x$. Since both are minimal we have $x=y$.
\end{proof}

By symmetry, we define $w^+$, see Figure~\ref{Fi:uapair-labeled}.

We also associate with $U$ four intervals, $I_U = [v^+, c^+]$, $J_U = [w^+, d^+]$, $T_U = [c_0 = d_0, 1]$, and $B_U = [0, u]$; see Figure~\ref{Fi:fourIntervals}.

The following statement is crucial to our proof.

\begin{lemma}\label{L:almostdisj}
Let $L$ be a slim semimodular lattice and let $U$ be a \emph{maximal} upper-adjacent pair in $L$. Then
\[
   I_U \ii J_U = \set{c_0 = d_0}.
\]
\end{lemma}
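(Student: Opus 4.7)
The plan is to split the claim into its two inclusions. For $\set{c_0} \subseteq I_U \ii J_U$: the upper bounds $c_0 \leq c^+$ and $c_0 = d_0 \leq d^+$ are immediate from $c_0$ being the bottom of $C_U$ and $D_U$, while the lower bounds $v^+ \leq c_0$ and $w^+ \leq c_0$ follow from Definition~\ref{D:vplus}, since $c_0 = 1_U$ lies above $v$ on the left boundary path and satisfies $c_0 \not\leq v$, so $c_0$ is a candidate witness for the minimum defining $v^+$ (and symmetrically for $w^+$).

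For the reverse inclusion, fix $x \in I_U \ii J_U$. My first target is the upper bound $x \leq c_0$, which I plan to obtain by proving $c^+ \mm d^+ = c_0$. The inequality $c_0 \leq c^+ \mm d^+$ is clear. If $y = c^+ \mm d^+ > c_0$, then Lemma~\ref{L:intchains} places $y$ in $[c_0, c^+] = C_U$ and in $[c_0, d^+] = D_U$, forcing $y = c_j = d_i$ with $j, i \geq 1$. Lemma~\ref{L:sidetops} then identifies $c_{j-1}$ as the rightmost element covered by $c_j$ and $d_{i-1}$ as the leftmost element covered by $d_i = c_j$, so in particular $c_{j-1} \neq d_{i-1}$. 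Throwing in the companion $x_j$ from the $4$-cell $C^U_j$ (strictly to the left of $c_{j-1}$), a short case split---according to whether $x_j$ coincides with $d_{i-1}$ or lies strictly between---shows in either case that $c_j$ is covered by at least three distinct elements from below. Lemma~\ref{L:4cell5} then produces an upper-adjacent pair $V$ with $1_V = c_j > c_0 = 1_U$, contradicting the maximality of $U$.

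For the lower bound $x \geq c_0$, the needed claim is the planar identity $v^+ \jj w^+ = c_0$. Since $v^+$ sits on the left boundary and $w^+$ on the right, in the fixed planar diagram every common upper bound of $v^+$ and $w^+$ must pass through the unique ``passage'' above $U$ where the two boundary paths converge, namely $c_0 = 1_U$. Granted this, $x \geq v^+$ and $x \geq w^+$ yield $x \geq c_0$, and together with $x \leq c_0$ we conclude $x = c_0$.

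The main obstacle I anticipate is the planar identity $v^+ \jj w^+ = c_0$ (or even the weaker bound $v^+ \jj w^+ \geq c_0$). The upper-bound portion reduces cleanly to the earlier section's structural lemmas, but this step genuinely invokes the fixed planar representation rather than only the abstract order. I would settle it by tracing the left- and right-boundary chains above $v$ resp.\ $w$ up to $v^+$ resp.\ $w^+$ and exploiting the minimal choices in Definition~\ref{D:vplus} together with the planar location of $1_U = c_0$ to force the join to land precisely at $c_0$.
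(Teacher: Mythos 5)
Your reduction of the hard inclusion to the identity $c^+ \mm d^+ = c_0$ starts well (placing $y = c^+ \mm d^+$ in $C_U \ii D_U$ via Lemma~\ref{L:intchains} is correct), but the case analysis that is supposed to finish it has a hole. In the case $x_j = d_{i-1}$ you have exhibited only \emph{two} distinct lower covers of $c_j$, namely $x_j = d_{i-1}$ and $c_{j-1}$; nothing you state excludes that these are the only ones (the right atom of the cell $D^U_i$ may coincide with $c_{j-1}$), so Lemma~\ref{L:4cell5} cannot be invoked, and the claim that ``in either case $c_j$ has at least three lower covers'' is unjustified. The gap closes with the lemmas you already cite, and in fact no case split is needed: by Lemma~\ref{L:intchains} both $C_U = [c_0,c^+]$ and $D_U = [c_0,d^+]$ are intervals that are chains, so they coincide on $[c_0,y]$; hence $i = j$ and $c_{j-1} = d_{i-1}$, and by Lemma~\ref{L:sidetops} this single element is simultaneously the rightmost and the leftmost element covered by $c_j$, i.e.\ its unique lower cover, contradicting the existence of the second atom $x_j$ of the cell $C^U_j$. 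The paper is more direct still: assuming a common element $c_i = d_j$ of the two chains, it forms from $C^U_i$ and $D^U_j$ an upper-adjacent pair whose unit exceeds $1_U$, contradicting maximality of $U$.

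Two further points. Your argument that $c_0 \in I_U \ii J_U$ is wrong as written: you justify $v^+ \le c_0$ by treating $c_0$ as a candidate for the minimum in Definition~\ref{D:vplus}, but that minimum ranges over the left boundary only, and $c_0 = 1_U$ is in general an interior element of $L$, so it is not a candidate, and the minimality of $v^+$ compares it only with other boundary elements. The inequality is true, but needs a different reason; for instance, $v \jj v^+ \succ v$ by semimodularity, and since $v$ is the zero of the left cell of $U$ its covers are the two atoms of that cell (Lemma~\ref{L:4cell4} caps the number at two), both below $1_U$, whence $v^+ \le v \jj v^+ \le c_0$. Finally, the identity $v^+ \jj w^+ = c_0$ --- equivalently, that no element of $I_U \ii J_U$ lies below $c_0$ --- is the other pillar of your reduction, and you leave it at a heuristic about a ``passage'' in the diagram; as it stands, that half of the lemma is not proved. (To be fair, the paper's own proof restricts the contrary hypothesis to a common element of the chains $C_U$ and $D_U$ and so does not treat this part explicitly either, reading it off the planar picture; but your plan makes the identity an explicit, and unproven, step.)
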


\begin{proof}
Let us assume, to the contrary, that $c_i =d_j \in I_U \ii J_U$, for some $1 \leq i \leq k$ and some $1\leq j\leq l$. Using the notation of Definition~\ref{D:leftchain}, form the upper-adjacent pair $V$ of the $4$-cells $C^U_i$ and $D^U_j$. Clearly, $1_U = c_0 = d_0 <  c_i = d_j = 1_V$, contradicting the maximality of $U$.
\end{proof}

We need one final concept:

\begin{definition}
The pair of elements $x$ and $y$ of $L$ is called an $I$-\emph{bridge}, if $x \in B_U$, $y \in I_U$ and $x \prec y$. Symmetrically, we define a $J$-\emph{bridge}. A \emph{bridge} is an $I$-bridge or a $J$-bridge.
\end{definition}

Figure~\ref{Fi:fourIntervals} illustrates this concept. The following statement is obvious:

\begin{lemma}\label{L:special}
Let $x$ and $y$ be an $I$-bridge. Then for every $z \in L$, either $x \jj z = y \jj z$ or $x \jj z$ and $y \jj z$ is an $I$-bridge. And symmetrically, for a $J$-bridge.
\end{lemma}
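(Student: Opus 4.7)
The plan is to reduce almost everything to semimodularity of $L$, and then check that the interval memberships defining an $I$-bridge are preserved under joining.

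First I would apply the semimodular property to the covering $x \prec y$: for any $z \in L$, we have $x \jj z \preceq y \jj z$. This is the workhorse of the whole argument and immediately splits the conclusion into the two alternatives: either $x \jj z = y \jj z$, which is the first alternative and done, or else $x \jj z \prec y \jj z$ genuinely covers, and we are in the second alternative. Both alternatives are mutually exclusive, so it suffices to analyze the covering case.

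In the covering case, I need to check that the covering pair $(x \jj z,\,y \jj z)$ lies in the right two intervals, i.e., $x \jj z \in B_U = [0,u]$ and $y \jj z \in I_U = [v^+, c^+]$. One direction of each is free: $y \jj z \geq y \geq v^+$, so the lower bound for $I_U$-membership is automatic, and we only have to argue $x \jj z \leq u$ and $y \jj z \leq c^+$.

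The main obstacle is this membership check, and here I would exploit the geometry of a maximal upper-adjacent pair. If $y \jj z$ were to leave $I_U$ upward, then by Lemma~\ref{L:intchains} the chain $C_U$ is an interval $[c_0, c^+]$ on the left boundary, so $y \jj z$ would have to rise through $c^+$ into the top portion $T_U$; a symmetric argument applies to $x \jj z$ and the interval $B_U$. The goal is to show that if either of these escapes happens, then Lemma~\ref{L:sidetops} and the maximality of $U$ force $x \jj z$ and $y \jj z$ to coincide (collapsing us back into the first alternative), because otherwise a new upper-adjacent pair with unit above $1_U$ would appear via Lemma~\ref{L:4cell5}, contradicting maximality. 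Once this collapse-or-stay dichotomy is established, the conclusion is immediate.

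For the $J$-bridge case, no new ideas are needed: the entire argument is symmetric under the left/right reflection of the planar diagram, swapping $C_U$ with $D_U$, $I_U$ with $J_U$, and $v^+$ with $w^+$.
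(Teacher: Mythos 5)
Your opening move is right: semimodularity applied to $x \prec y$ gives $x \jj z \preceq y \jj z$, and you correctly isolate the remaining task, namely that when $x \jj z \neq y \jj z$ one must check $x \jj z \in B_U$ and $y \jj z \in I_U$. But at exactly that point the proposal stops being a proof: you write that ``the goal is to show'' that any escape from $B_U$ or $I_U$ collapses the two joins, and you gesture at Lemma~\ref{L:intchains}, Lemma~\ref{L:sidetops}, Lemma~\ref{L:4cell5} and the maximality of $U$ without giving an argument. That is a restatement of the lemma in its only nontrivial case, not a proof of it; moreover the suggested mechanism is doubtful, since invoking Lemma~\ref{L:4cell5} requires producing an element with three distinct \emph{lower} covers, which nothing in your setup supplies, and maximality of $U$ is not really the operative ingredient. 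The two membership checks are also not ``symmetric'': the entire content is the single claim $x \jj z \leq u$, because once that is known, $y \jj z = y \jj (x \jj z) \leq y \jj u = c_0 \leq c^+$ and $y \jj z \geq y \geq v^+$ put $y \jj z$ in $I_U$ immediately (here $y \mm u = x \prec y$, so by semimodularity $y \jj u$ covers $u$, and $c_0$ is the only cover of $u$, a small planarity fact: a second cover of $u$ would force an edge into the interior of one of the two cells of $U$).

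Here is the kind of argument that actually closes the gap, and note it uses Lemma~\ref{L:4cell4} rather than maximality: suppose $w \geq x$ and $w \nleq u$; take a maximal chain from $x$ to $w$ and let $t \prec w'$ be its first step leaving $[0,u]$, so $x \leq t \leq u$ and $w' \nleq u$. If $t = u$, then $w'$ covers $u$, hence $w' = c_0 \geq y$. If $t < u$, then $t$ has a cover $t' \leq u$; also $y \mm t = x \prec y$, so semimodularity gives $t \prec t \jj y$ with $t \jj y \nleq u$; since $t$ has at most two covers by Lemma~\ref{L:4cell4}, and $w'$, $t \jj y$ are both $\nleq u$ while $t' \leq u$, we get $w' = t \jj y \geq y$. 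In either case $y \leq w' \leq w$; so $y \nleq x \jj z$ forces $x \jj z \leq u$, which is precisely the dichotomy you left as a goal. (For comparison: the paper gives no proof at all---it declares the lemma obvious---so there is no route to match; but as written your sketch has a genuine gap at the one step that makes the lemma nontrivial.)
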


\section{Special join-homomorphisms}\label{S:special}
We would like to construct planar semimodular lattices from planar distributive lattices. Since homomorphisms preserve distributivity, it is logical to try join-homomorphisms. Unfortunately, $N_5$ is a join-homomorphic image of $C_2 \times C_3$, so we have to look for special join-homomorphisms. Such a concept is introduced in the next definition.

\begin{definition}
Let $L$ and $K$ be finite lattices. We call the join-homomorphism $\gf \colon L \to K$ \emph{cover-preserving} if{f} it preserves the relation $\preceq$; equivalently, if $x \prec y$ implies that $x\gf \prec y\gf$, for all $x$, $y\in L$, provided that $x\gf \neq y\gf$.
\end{definition}

The next two lemmas show that this is the property we need.

\begin{lemma}\label{L:cpjh}
Let $\gf \colon L \to K$ be a cover-preserving join-homomorphism. If $x \prec y$ in $K$, then there exists $a \in \set{x}\gf^{-1} \ci L$ and $b \in \set{y}\gf^{-1} \ci L$ such that $a \prec b$ in $L$.
\end{lemma}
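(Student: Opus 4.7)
The plan is to produce, for a given cover $x\prec y$ in $K$, a pair of comparable preimages $a'\le b$ with $\gf(a')=x$ and $\gf(b)=y$, and then to thread a maximal chain between them so that the cover-preserving property forces exactly one ``jump'' from $x$ to $y$ in the image.

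First I would pick any preimages $a_0\in\set{x}\gf^{-1}$ and $b_0\in\set{y}\gf^{-1}$ (assuming implicitly that $\gf$ is surjective; otherwise the claim is vacuous). Set $b=a_0\jj b_0$. Since $\gf$ is a join-homomorphism, $\gf(b)=\gf(a_0)\jj\gf(b_0)=x\jj y=y$ because $x\prec y$ in $K$. Also $a_0\le b$ by construction, and $\gf(a_0)=x$. So without loss of generality I may take a preimage $a'$ of $x$ and a preimage $b$ of $y$ with $a'\le b$ in $L$.

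Next I would choose a maximal chain $a'=z_0\prec z_1\prec \dots\prec z_n=b$ in $L$ from $a'$ up to $b$. Applying $\gf$ and using monotonicity together with $\gf(a')=x$ and $\gf(b)=y$, every $\gf(z_i)$ lies in the interval $[x,y]$ of $K$; but $x\prec y$, so $\gf(z_i)\in\set{x,y}$ for each $i$. The sequence $\gf(z_0),\dots,\gf(z_n)$ begins at $x$ and ends at $y$, so there is a unique smallest index $i$ with $\gf(z_i)=x$ and $\gf(z_{i+1})=y$. Because $\gf$ is cover-preserving and $\gf(z_i)\neq \gf(z_{i+1})$, the cover $z_i\prec z_{i+1}$ in $L$ maps to the cover $x\prec y$ in $K$, which is exactly what we want: set $a=z_i$ and take $z_{i+1}$ as the required preimage of $y$.

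The only nontrivial move is the first one---realizing that we must arrange the two preimages to be comparable before hunting for a cover. Once that reduction is in place, the cover-preserving hypothesis does the real work: it forbids the image of a covering chain from skipping anything, so confinement of the image to the two-element interval $\set{x,y}$ produces the covering pair automatically. I do not expect any genuine obstacle beyond this observation.
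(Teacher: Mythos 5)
Your argument is correct, and it reaches the conclusion by a slightly different mechanism than the paper. The paper also begins by producing comparable preimages, but it does so canonically: it takes $a$ to be the \emph{largest} element of the fiber $\set{x}\gf^{-1}$ (which exists because the fiber is join-closed and $L$ is finite) and then a \emph{minimal} element $b$ of $\set{y}\gf^{-1}$ above $a$, and shows directly that $a \prec b$, since any $c$ with $a \le c \le b$ maps into $\set{x,y}$ and is then forced to equal $a$ or $b$ by the extremal choices. You instead comparabilize arbitrary preimages via $b = a_0 \jj b_0$ and then thread a maximal chain from $a_0$ to $b$, locating the index where the image jumps from $x$ to $y$; finiteness enters through the existence of the maximal chain of covers rather than through extremal elements of the fibers. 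Both proofs hinge on the same observation---every element between a preimage of $x$ and a comparable preimage of $y$ lands in the two-element interval $[x,y]$---so the difference is one of bookkeeping, with the paper's version yielding a distinguished covering pair and yours being marginally more flexible. One small remark: your closing appeal to the cover-preserving hypothesis is superfluous, since once you have $z_i \prec z_{i+1}$ with $z_i\gf = x$ and $z_{i+1}\gf = y$ the lemma is already proved; in fact neither your proof nor the paper's uses cover-preservation at all. Your parenthetical about surjectivity matches the paper's implicit assumption (the lemma is only applied to surjective maps), so it is not a gap.
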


\begin{proof}
$\set{x}\gf^{-1}$ is a join-closed subset of $L$; let $a$ be its largest element. For any $b \in \set{y}\gf^{-1}$, observe that $(a \jj b)\gf  =  a\gf \jj b\gf  =  x \jj y  = y$, so we can choose a minimal element $b$ of $\set{y}\gf^{-1}$ with $a \le b$. We claim that $a \prec b$. Indeed, if $c \in L$ satisfies that $a \le c\le b$, then $a \gf \prec b\gf$ implies that $c \in \set{x}\gf^{-1}$ or $c \in \set{y}\gf^{-1}$. The first possibility yields that $a  =  c$ by the maximality of $a$, the second possibility yields that $b  =  c$ by the minimality of $b$.
\end{proof}

\begin{lemma}\label{L:semimodquotient}
Let $\gf$ be a cover-preserving join-homomorphism of the lattice $L$ onto the lattice $K$. If $L$ is semimodular, then so is $K$.
\end{lemma}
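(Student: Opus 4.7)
The plan is to verify semimodularity directly in $K$: take $o, a, b \in K$ with $o \prec a$, $o \prec b$, and $a \neq b$, and show that $a \prec a \jj b$. The natural idea is to lift the covering pair $o \prec a$ and $o \prec b$ back to $L$ with a \emph{common} bottom, then apply semimodularity upstairs, and finally push the resulting covering $a_1 \prec a_1 \jj b_1$ back down through $\gf$ using the cover-preserving property.

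For the lift, I would refine the proof of Lemma~\ref{L:cpjh}. Let $a_0$ be the largest element of $\set{o}\gf^{-1}$; this exists because $\set{o}\gf^{-1}$ is a join-closed (hence finite-join-complete) subset of $L$. Exactly as in that proof, I can pick a minimal $a_1 \in \set{a}\gf^{-1}$ with $a_1 \ge a_0$ and a minimal $b_1 \in \set{b}\gf^{-1}$ with $b_1 \ge a_0$, and the argument there shows $a_0 \prec a_1$ and $a_0 \prec b_1$ in $L$. Since $\gf(a_1) = a \neq b = \gf(b_1)$, we have $a_1 \neq b_1$. Now semimodularity of $L$ yields $a_1 \prec a_1 \jj b_1$ in $L$.

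Finally, since $\gf$ is a join-homomorphism, $\gf(a_1 \jj b_1) = a \jj b$, while $\gf(a_1) = a$. Because $\gf$ is cover-preserving, $a_1 \prec a_1 \jj b_1$ forces either $a = a \jj b$ or $a \prec a \jj b$ in $K$. The first case would give $b \le a$; combined with $o \prec b$ and $o \prec a$, this collapses $b$ into $\set{o,a}$, contradicting $a \neq b$ and $o \neq b$. Hence $a \prec a \jj b$, as required.

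I do not foresee a serious obstacle: everything is mechanical once the common bottom $a_0$ is used in the lifting step. The only place one has to be a bit careful is in excluding the collapse $\gf(a_1) = \gf(a_1 \jj b_1)$, but the covering hypotheses in $K$ make this immediate.
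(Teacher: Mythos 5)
Your proposal is correct, but it takes a somewhat different route from the paper's. The paper verifies the covering-exchange form of semimodularity directly: given $x \prec y$ in $K$ and an arbitrary $z \in K$, it lifts the single cover to $a \prec b$ in $L$ by Lemma~\ref{L:cpjh}, picks an arbitrary $c \in \set{z}\gf^{-1}$, applies semimodularity of $L$ to get $a \jj c \preceq b \jj c$, and pushes this down using the fact that $\gf$ preserves $\preceq$; no common-bottom lift and no terminal case analysis are needed. You instead verify the Birkhoff-type condition ($o \prec a$, $o \prec b$, $a \neq b$ imply $a \prec a \jj b$), which forces you to strengthen Lemma~\ref{L:cpjh} to a simultaneous lift of two covers over the common bottom $a_0$, the largest element of $\set{o}\gf^{-1}$; your rerun of that argument does work, and your exclusion of the collapse $a = a \jj b$ at the end is correct. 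Two small points: this route implicitly uses that the Birkhoff condition is equivalent to semimodularity, which is legitimate here because all lattices involved are finite and is exactly how the paper checks semimodularity in the proof of Lemma~\ref{L:4cell2}; and the assertion that semimodularity of $L$ yields $a_1 \prec a_1 \jj b_1$ deserves the one-line justification that $b_1 \nleq a_1$ (otherwise $a_0 \le b_1 \le a_1$ and $a_0 \prec a_1$ would force $b_1 \in \set{a_0, a_1}$, impossible since $\gf$ separates these elements from $b_1$). The paper's argument is leaner, needing only Lemma~\ref{L:cpjh} as stated and the official definition of semimodularity; yours buys a proof phrased entirely in the covering-configuration language used elsewhere in the paper.
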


\begin{proof}
Let $x$, $y$, $z \in K$; let us assume that $x \prec y$. By Lemma~\ref{L:cpjh}, there exist $a\in \set{x}\gf^{-1}$ and $b\in \set{y}\gf^{-1}$ such that $a \prec b$. Let us choose an arbitrary $c \in \set{z}\gf^{-1}$.

Since $a \prec b$, the semimodularity of $L$ implies that $a \jj c \preceq b \jj c$. The map $\gf$ is a cover-preserving join-homomorphism, so $a\gf \jj c\gf \preceq b\gf \jj c\gf$, that is, $x \jj z \preceq y \jj z$.
\end{proof}

\section{One-step expansion}\label{S:oneexpansion}
We start with a slim semimodular lattice $L$. If $L$ is modular, then it is distributive, and we are on familiar territory. If $L$~is not modular, by Lemma~\ref{L:sideAdj}, $L$~has at least one pair $U$ of upper-adjacent $4$-cells. The crucial step is the One-step Expansion Theorem, which eliminates one such maximal pair.

\begin{theorem}[One-step Expansion Theorem]\label{T:exptheorem}
Let $L$ be a slim semimodular lattice. If $L$ is not modular, then there exists a slim semimodular lattice $\ol{L}$ with the following two properties:
\begin{enumeratei}
\item there is a cover-preserving join-homomorphism of $\ol{L}$ onto $L$;
\item $\ol{L}$ has one fewer pair of upper-adjacent $4$-cells than $L$.
\end{enumeratei} 
\end{theorem}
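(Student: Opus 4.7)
The plan is to pick a maximal upper-adjacent pair $U$ of $L$, which exists by Lemma~\ref{L:sideAdj} since $L$ is nonmodular, and then to ``unfold'' the common top of the two cells of $U$ by duplicating the interval $I_U = [v^+, c^+]$. The intent is that $\overline{L}$ should look like $L$ with an extra copy of $I_U$ wedged between the bottom region $B_U$ and the original $I_U$, so that the two cells of $U$ no longer share their unit $1_U$; the quotient map then simply re-identifies the copy with the original.

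Concretely, I would set $\overline{L} = L \sqcup I_U'$, where $I_U' = \{\, x' : x \in I_U \,\}$ is a disjoint formal copy of $I_U$. The partial order on $\overline{L}$ is defined by preserving the orders on $L$ and on $I_U'$, and by declaring, for $z \in L$ and $x \in I_U$, that $z \le x'$ iff $z \le v^+$ in $L$, and that $x' \le z$ iff $x \le z$ in $L$ along a path avoiding $u$. The candidate quotient map $\gf \colon \overline{L} \to L$ is the identity on $L$ and sends $x' \mapsto x$; by construction it will be an onto, cover-preserving join-homomorphism.

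The verifications I would carry out in order are: that $\overline{L}$ is a lattice, using the formulas for joins and meets that one reads off from the planar picture, with Lemma~\ref{L:almostdisj} ($I_U \ii J_U = \set{c_0 = d_0}$) guaranteeing that the two flanks of $U$ do not tangle; that $\overline{L}$ is planar and slim, by sliding $I_U'$ into the plane just to the left of $I_U$ and verifying that every covering square remains an interval; that $\overline{L}$ is semimodular, via Lemma~\ref{L:4cell2}, by checking that every cell is a $4$-cell and that any two cells with a common zero share a unit, most cells being inherited from $L$ and the ``new'' cells along $I_U'$ being $4$-cells by Lemmas~\ref{L:intchains} and~\ref{L:sidetops}; and finally that the number of upper-adjacent pairs in $\overline{L}$ is exactly one less than in $L$.

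The main obstacle is this last item. Destruction of $U$ is clear, since in $\overline{L}$ the two cells of $U$ have been pulled apart by the inserted copy $I_U'$ and so no longer share tops. That no new pair arises rests critically on the maximality of $U$: by Lemmas~\ref{L:intchains} and~\ref{L:sidetops}, above $1_U$ the only covers along the left flank form the chain $C_U$, and symmetrically on the right form $D_U$, so the only way the duplication could create a fresh upper-adjacency would force a pre-existing upper-adjacent pair in $L$ with unit strictly above $1_U$, contradicting maximality. Carrying this case analysis out carefully, with attention to the interaction with elements of $T_U = [1_U, 1]$ and with the bridges from Lemma~\ref{L:special} that cross $U$, is the delicate part of the proof.
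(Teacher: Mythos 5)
Your construction is genuinely different from the paper's, and it does not work: the key claim that in $\overline{L}$ ``the two cells of $U$ have been pulled apart and no longer share tops'' is false for the lattice you define. Every new element $x'$ lies above only elements $z\le v^+$ (or $z\le v$, however one resolves your ambiguous mixed-order rule), and $u\not\le v^+$, so no new element lies above the common atom $u$ of the pair $U$. Hence joins of old elements with $u$ are computed exactly as in $L$: if $a$ and $b$ are the outer atoms of the two cells of $U$, then $a\jj u=b\jj u=1_U$ still holds in $\overline{L}$, so the pair is not destroyed; worse, semimodularity is lost. Run your definition on the smallest case $L=S_7$ (covers $0\prec 0_A,0_B$; $0_A\prec a,u$; $0_B\prec u,b$; $a,u,b\prec 1$), where $v^+=a$ and $I_U=\set{a,1}$. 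Whichever way the mixed order is read, the copy $\set{a',1'}$ is inserted with no new element above $u$, and one computes $a'\jj u=1$ while the interval $[a',1]$ contains $a$ (and, on the literal reading $z\le x'$ iff $z\le v^+$, also $1'$, since then $a\le 1'$). Thus $0_A\prec u$ but $a'=a'\jj 0_A$ is neither equal to nor covered by $a'\jj u$, so $\overline{L}$ is not semimodular (nor a $4$-cell lattice), and both conclusions of the theorem fail. The correct expansion of $S_7$ is $C_3\times C_3$, in which the preimage of $1$ has \emph{three} elements: to separate the two cells one must split $1_U$ (and, propagating upward, both chains $C_U$ and $D_U$) into a left-reachable copy, a right-reachable copy, and a top copy. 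A one-sided duplication of $I_U$ placed below the original can never achieve this.

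That two-sided splitting is precisely what the paper's construction does and what your sketch omits: $\overline{L}$ is defined as $T\times\set{1}\uu B\times\set{0}\uu I\times\set{\ga}\uu J\times\set{\gb}$ inside $L\times C_2^2$, so $c_0=d_0=1_U$ is tripled and $\langle u,0\rangle$ acquires the two covers $\langle 1_U,\ga\rangle$ and $\langle 1_U,\gb\rangle$, one into each flank. The embedding also carries the load you defer to ``verifications'': join-closure (Lemma~\ref{L:explattice}) rests on the decomposition $L=T\uu B\uu I\uu J$ of Lemma~\ref{L:intervalsOnto}, whose part (i) needs a genuine argument absent from your plan; the description of covers in $\overline{L}$ (Lemma~\ref{L:expcover}) reduces semimodularity to that of $L\times C_2^2$ together with the bridge Lemma~\ref{L:special}; the first projection is then a cover-preserving join-homomorphism; and the count of upper-adjacent pairs (Lemma~\ref{L:expfewerpairs}) uses both the maximality of $U$ and the fact that $\langle u,0\rangle$ has two covers. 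In your proposal the corresponding claims---that $\gf$ is cover-preserving ``by construction'', that $\overline{L}$ is slim, planar and semimodular, and that the pair count drops by exactly one---are asserted rather than proved, and the $S_7$ computation shows they cannot be repaired for the construction as stated.
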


In this section, we construct $\ol{L}$. The smallest example is $L = S_7$ of Figure~\ref{Fi:semimod}. Starting with $S_7$, we construct the distributive lattice $C_3^2$ and a cover-preserving homomorphism $\gf$ of $C_3^2$ onto $S_7$. The classes of the congruence kernel of $\gf$ is marked by wavy lines.

\begin{figure}[htb]
\centerline{\includegraphics{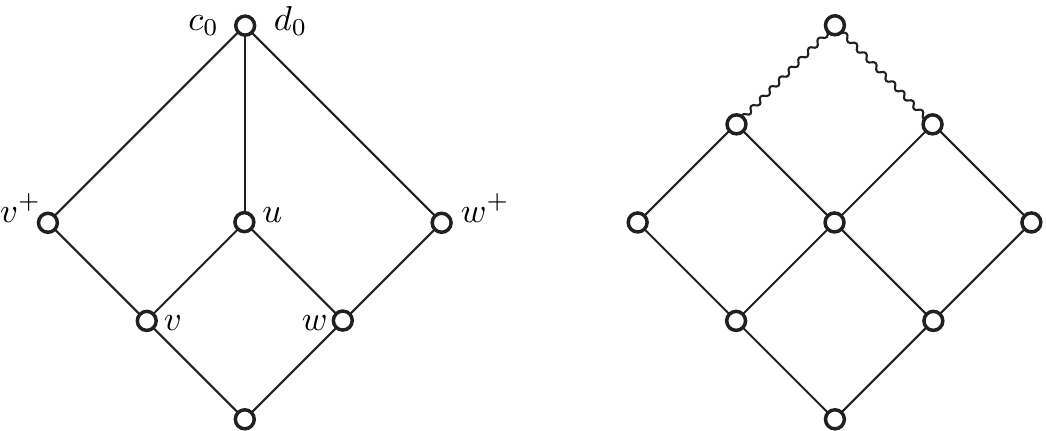}}
\caption{$S_7$ and $\ol S_7$.}\label{Fi:eg1}
\end{figure}

Since $L$ is not modular, by Lemma~\ref{L:sideAdj}, $L$ has upper-adjacent $4$-cells; let $U$ be a \emph{maximal} upper-adjacent $4$-cell of $L$; we keep $U$ fixed.

In Section~\ref{S:upperadjacent}, we associated with $U$ four intervals of $L$: $I_U$, $J_U$, $T_U$, and $B_U$ and two chains: $C_U$ and $D_U$; since $U$ is fixed, we shall denote them by $I$, $J$, $T$, $B$, $C$, and $D$.

\begin{lemma}\label{L:intervalsOnto}\hfill
\begin{enumeratei}
\item $L = T \uu B \uu I \uu J$.
\item $B$ is disjoint to $I$, $J$, and $T$.
\item $I \ii J = \set{c_0 = d_0}$.
\item $I \ii T = C$ and $J \ii T = D$. 
\end{enumeratei}
\end{lemma}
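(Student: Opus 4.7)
The plan is to dispatch the four parts in the order (iii), (iv), (ii), (i). Part (iii) is the conclusion of Lemma~\ref{L:almostdisj}, so there is nothing to prove.

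For part (iv), I will argue $I \ii T = C$; the claim $J \ii T = D$ then follows by symmetry. The inclusion $I \ii T \ci C$ is quick: if $x \in I \ii T$, then $c_0 \leq x$ \tup{(}from $x \in T$\tup{)} and $x \leq c^+ = c_k$ \tup{(}from $x \in I$\tup{)}, so $x \in [c_0, c_k]$, and by Lemma~\ref{L:intchains} this interval equals $C$. For $C \ci I \ii T$, each $c_i$ lies in $T = [c_0,1]$ trivially and satisfies $c_i \leq c^+$; the remaining inequality $v^+ \leq c_i$ reduces to $v^+ \leq c_0$, which I would verify by tracking the left-boundary chain: the left-boundary elements $\leq v$ form a proper initial segment of the left boundary whose first element not $\leq v$ is $v^+$, and since $v \prec c_0$, planarity together with the cover structure forces $v^+ \leq c_0$.

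Part (ii) reduces to comparisons of bounds. First, $B \ii T = \emptyset$ is immediate from $u \prec c_0$, since no element can satisfy both $x \leq u$ and $c_0 \leq x$. For $B \ii I = \emptyset$, I would suppose $x \in B \ii I$ and derive $v^+ \leq u$; combining this with $u \prec c_0$ and the fact that $v^+$ sits on the left boundary just above the initial segment $\leq v$ then forces $v^+ \leq v$, contradicting the choice of $v^+$. The case $B \ii J = \emptyset$ is symmetric.

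Part (i), that $T \uu B \uu I \uu J = L$, is the main obstacle. Fix $x \in L$. If $c_0 \leq x$, then $x \in T$; if $x \leq u$, then $x \in B$. Otherwise, by planarity, $x$ lies strictly to one side of the upper-adjacent pair $U$---say to the left---and I would verify $v^+ \leq x \leq c^+$ to conclude $x \in I$. The upper bound $x \leq c^+$ is the delicate step: Lemma~\ref{L:sidetops} says that $c_{i-1}$ is the rightmost cover of $c_i$, which together with the fact that $C_U$ terminates on the left boundary means that the chain $C_U$ extended downward along the left boundary to $v^+$ cleanly separates the strictly-left region from $T \uu B$. Hence any strictly-left element is bounded above by some $c_i$, in particular by $c^+$. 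The lower bound $v^+ \leq x$ follows from the minimality of $v^+$ on the left boundary. The symmetric right case places $x \in J$.
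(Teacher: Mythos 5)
Your handling of (iii) and (iv) tracks the paper (which disposes of them by citing Lemmas~\ref{L:almostdisj} and~\ref{L:intchains}), and (ii) is at roughly the level of detail the paper itself offers. The genuine gap is in (i), at exactly the step you yourself flag as delicate: proving $x\le c^+$ for an element $x\parallel u$ lying on the left. Your justification --- that $C_U$ ``extended downward along the left boundary to $v^+$'' cleanly separates the strictly-left region, ``hence any strictly-left element is bounded above by some $c_i$'' --- is not a proof. First, the separating object you describe is not a chain of $L$: $C_U$ meets the left boundary at its \emph{top} $c^+=c_k$ (that is what termination means), while $c_0$, $u$, $v$ are interior elements; to close the region from below one must descend from $c_0$ through $u$ to $0$, not run from $c_0$ along the left boundary down to $v^+$. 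Second, and more importantly, a topological separation in the diagram does not by itself yield the order-theoretic bound $x\le c^+$; that inference is precisely what has to be proved, and Lemma~\ref{L:sidetops} does not supply it: it says $c_{i-1}$ is the \emph{rightmost} lower cover of $c_i$, i.e., it controls the right-hand side of $C_U$ (relevant to $I\ii T=C$), not elements to the left. A rigorous separation argument would need Kelly--Rival-type facts about regions bounded by maximal chains, together with a proof (the paper only says ``intuitively'') that $c^+$ lies on the left boundary; none of this is in your sketch.

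The paper closes this step with an explicit semimodularity argument you would need to replace: assuming $x\nleq c^+$, it takes a maximal chain $c^+=z_0\prec\dots\prec z_m=x\jj c^+$, chooses at each level the rightmost lower cover lying to the left of the previous chain element, and so builds a descending sequence of $4$-cells $C_m,\dots,C_1$; the bottom cell $C_1$ has unit $z_1\succ c^+$ with an atom to the left of $c^+$, and by Definition~\ref{D:leftchain} this configuration would let $C_U$ be continued past $c_k=c^+$, contradicting its termination. Your ``hence'' asserts the conclusion of this argument without an argument. A smaller omission: your ``otherwise'' case silently includes elements with $u<x$ but $c_0\nleq x$, to which your left/right dichotomy does not apply; the paper explicitly disposes of these by noting that $x>u$ forces $x\ge c_0$, and some such step is needed in your version as well.
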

\begin{figure}[thb]
\centerline{\includegraphics{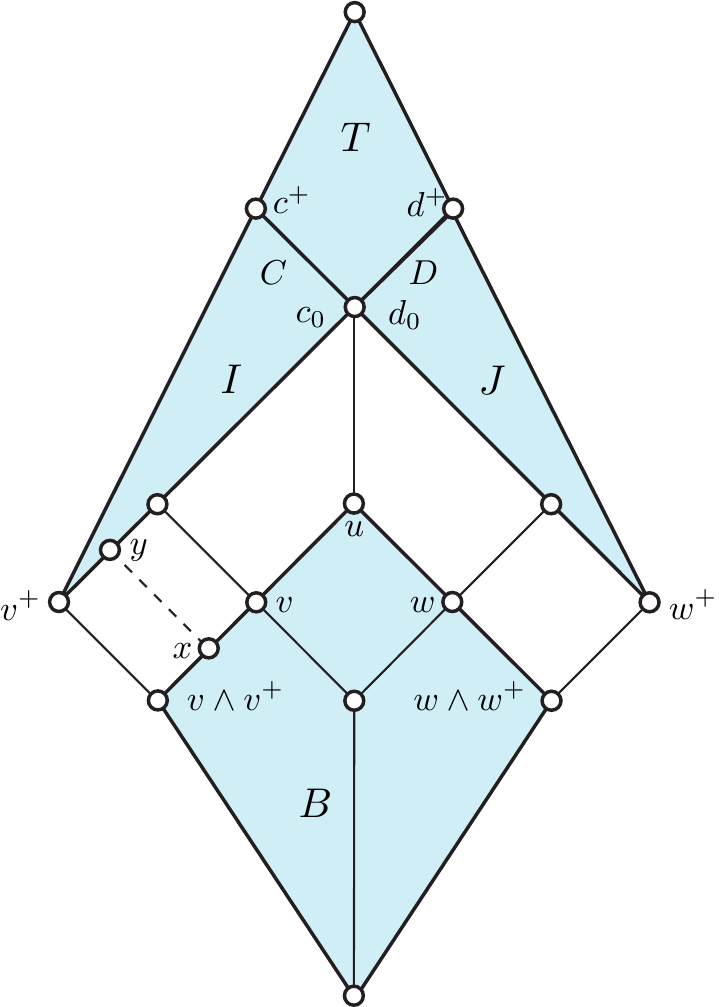}}
\caption{The four intervals of $L$.}\label{Fi:fourIntervals}
\end{figure}

\begin{proof}\hfill

(i). Let $x \in L$. We prove that $x\in T \uu B \uu I \uu J$.

If $x \leq u$, then $x \in B$. If $x > u$, then $x \geq c_0 = d_0$, so $x \in T$.

So we can assume that $x \parallel u$; by symmetry, we can also assume that $x$ is to the left of $u$. By Lemma~\ref{L:v+unique}, we obtain that $x \geq v^+$. If $x \leq c^+$, then $x \in I$ and we are done.

By way of contradiction, let us assume that $x \nleq c^+$, that is, $x$, $c^+ < x \jj c^+$. Let $z_0 = c^+ \prec \dots \prec z_m = x\jj c^+ \nleq c^+$ be a maximal chain in $L$ between $c^+$ and $x\jj c^+ \nleq c^+$; of course, $m \geq 1$. Let $y_{m-1} \prec z_m$ be the rightmost element to the left of $z_{m-1}$. Then $C_m = \set{y_{m-1} \mm z_{m-1}, y_{m-1},z_{m-1}, y_m}$ is a $4$-cell. Define $y_{m-2} = y_{m-1} \mm z_{m-1}$. By~induction, we get the $4$-cell $C_i = \set{y_{i-1} \mm z_{i-1}, y_{i-1},z_{i-1}, z_i}$ and the element $y_i$, for all $0 < i \leq m$. 

The $4$-cell $C_1$ by Definition~\ref{D:leftchain} (see also the comment following the definition) contradicts that $C$ terminates with $c^+$. This completes the proof of (i).

(ii) is obvious. 

(iii) was proved in Lemma~\ref{L:almostdisj}.

(iv) follows from Lemma~\ref{L:intchains}.
\end{proof}

Again note that this decomposition of $L$ and the definition of $\ol{L}$ depends on the planar representation of $L$ we fixed.

Now we are ready to define the lattice $\ol{L}$ of Theorem~\ref{T:exptheorem}.

\begin{figure}[htb]
\centerline{\includegraphics{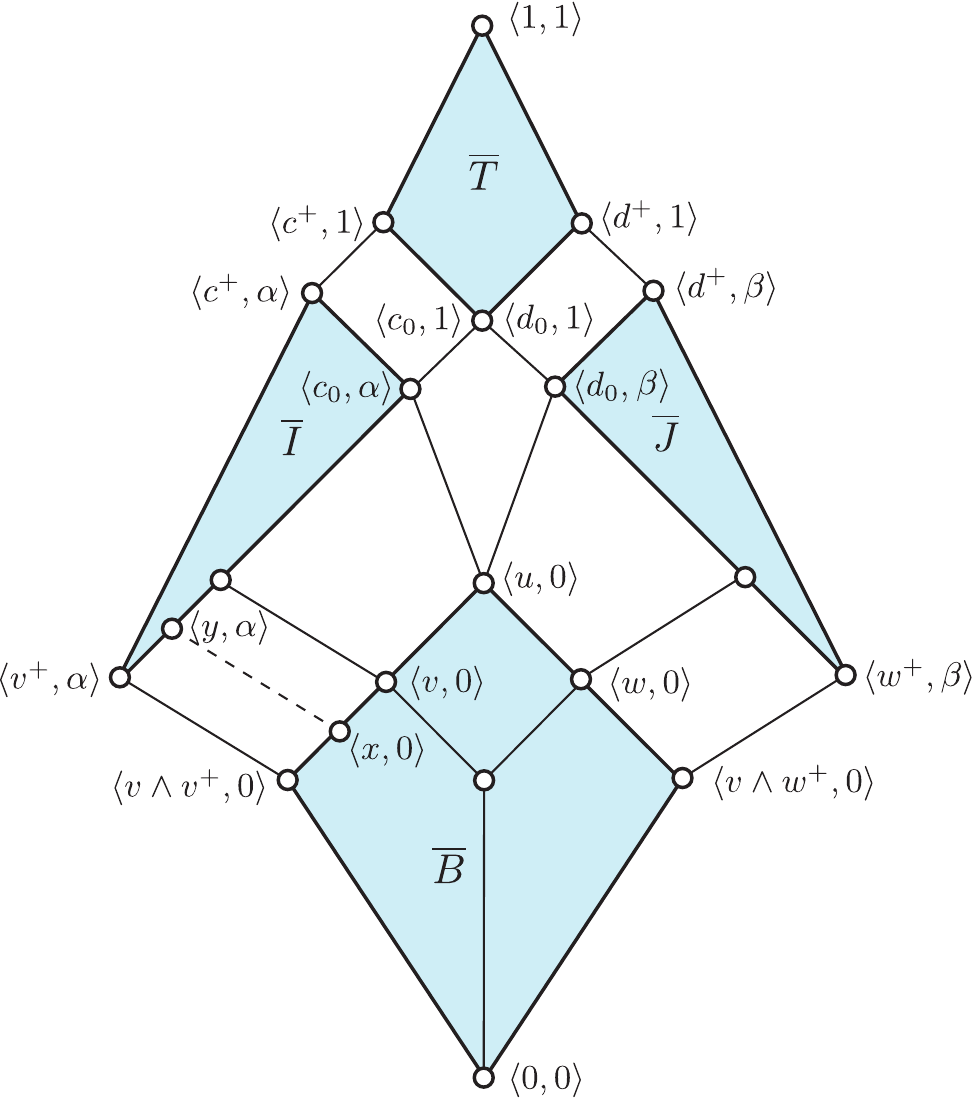}}
\caption{The one step extension $\ol L$ of $L$.}\label{Fi:extension}
\end{figure}

\begin{lemma}\label{L:explattice}
Let $C_2^2  =  \set{0, \ga, \gb, 1}$. We define four intervals of the lattice $L \times C_2^2$:
\begin{align*}
   \ol{T} & =  [\vv<c_0, 1>,\vv<1, 1>]=T \times\set{1},\\
   \ol{B} & =  [\vv<0,0>, \vv<u, 0>]=B \times\set{0},\\
   \ol{I} & =  [\vv<v^+,\ga>, \vv<c^+,\ga>]=I \times \set{\ga},\\
   \ol{J} & =  [\vv<w^+,\gb>, \vv<d^+, \gb>]=J \times\set{\gb}.
\end{align*}
Then $\ol{L}=\ol{T} \uu \ol{B} \uu \ol{I} \uu \ol{J}$ is a join-subsemilattice of $L \times C_2^2$ with zero $\vv<0, 0>$, hence $\ol{L}$ is a lattice.
\end{lemma}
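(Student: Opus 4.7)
The plan is to verify closure of $\ol L$ under the join inherited from $L\times C_2^2$ by case analysis on the four intervals, after which $\ol L$ being a lattice is automatic from finiteness together with the presence of the zero $\vv<0,0>\in\ol B$.

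Within any single interval closure is immediate, since each of $\ol T$, $\ol B$, $\ol I$, $\ol J$ is a product interval of $L\times C_2^2$ and hence join-closed. For the off-diagonal cases the second-coordinate join is forced by the structure of $C_2^2$, so all work is in the first coordinate. Pairs involving $\ol T$ produce second coordinate $1$, and since $T=[c_0,1]$ is an up-set that already contains $c_0$, the first coordinate stays in $T$. For $\vv<b,0>\in\ol B$ paired with $\vv<i,\ga>\in\ol I$ the join is $\vv<b\jj i,\ga>$, and I need $b\jj i\in I=[v^+,c^+]$; the lower bound is $b\jj i\geq i\geq v^+$, and the upper bound $b\jj i\leq u\jj c^+=c^+$ follows from $b\leq u\prec c_0\leq c^+$ together with $i\leq c^+$. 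The $\ol B$--$\ol J$ case is symmetric.

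The main obstacle is the $\ol I$--$\ol J$ case, where for $\vv<i,\ga>$ and $\vv<j,\gb>$ the join is $\vv<i\jj j,1>$, which lies in $\ol T$ precisely when $i\jj j\geq c_0$. Arguing by contradiction, set $s=i\jj j$ and assume $s\not\geq c_0$. By Lemma~\ref{L:intervalsOnto}(i), $s$ lies in $T\uu B\uu I\uu J$; the assumption rules out $T$, and $s\geq v^+$ with $v^+\in I$ and $B\ii I=\emptyset$ (part~(ii)) gives $v^+\not\leq u$ and hence $s\not\leq u$, ruling out $B$. By the left/right symmetry of the construction it suffices to treat $s\in I$. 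Then $j\leq s\leq c^+$ together with $j\leq d^+$ gives $j\leq c^+\mm d^+$; Lemma~\ref{L:intchains} identifies $[c_0,c^+]=C$ and $[c_0,d^+]=D$, and Lemma~\ref{L:almostdisj} gives $C\ii D\subseteq I\ii J=\set{c_0}$, so $c^+\mm d^+=c_0$ and hence $j\leq c_0$. If $j=c_0$, then $s\geq c_0$, a contradiction; the remaining subcase $j<c_0$ will be closed by tracing a maximal chain from $j$ up to $s\in I$ and arguing that any cover relation transitioning from $J$ into $I$ along this chain yields, through the cells $C^U_i$ and $D^U_j$ of Definition~\ref{D:leftchain}, an upper-adjacent pair whose unit strictly exceeds $c_0=1_U$, contradicting the maximality of $U$.

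Once join-closure is verified, finiteness of $\ol L$ and the presence of $\vv<0,0>\in\ol B$ make $\ol L$ a lattice: for any $x,y\in\ol L$ the set of common lower bounds inside $\ol L$ is nonempty and join-closed, so it has a maximum, which serves as $x\mm y$. The hardest step is the subcase $j<c_0$ of the $\ol I$--$\ol J$ analysis; its resolution rests squarely on the planar geometry of $L$ and the maximality of $U$.
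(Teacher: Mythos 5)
Your overall strategy coincides with the paper's: verify join-closure of $\ol{L}$ inside $L\times C_2^2$ block by block, then obtain meets from finiteness together with the zero $\vv<0,0>$. For the cases involving $\ol{T}$, and for $\ol{B}$ joined with $\ol{I}$ (or $\ol{J}$), your details are correct and in fact fuller than the paper's one-line assertions. The difficulty is concentrated in the one case that carries all the content, namely $\ol{I}$ joined with $\ol{J}$, i.e., the claim that $i\jj j\geq c_0$ for $i\in I$, $j\in J$. Your contradiction argument is sound up to the conclusion $j\leq c^+\mm d^+=c_0$, but the decisive subcase $j<c_0$ is not proved: you only announce that it ``will be closed'' by tracing a maximal chain from $j$ up to $s$ and producing an upper-adjacent pair with unit above $1_U$ via the cells $C^U_i$ and $D^U_j$ of Definition~\ref{D:leftchain}. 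That mechanism is not available in your situation: those cells are attached to the chains $C$ and $D$, hence to elements $\geq c_0$, whereas under your hypothesis every element of the traced chain lies below $s$ and therefore is not above $c_0$ (it cannot lie in $T$), so no cell $C^U_i$ or $D^U_j$ can occur along it. As written, the crucial step is a promissory note resting on a misdirected plan, so the proof is incomplete exactly where the paper's Case 5 needs justification.

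What actually closes this case is planarity, not the maximality of $U$. Under your hypothesis every element of a maximal chain from $j$ to $s$ lies in $(I\uu J)-\set{c_0}$ and avoids $B$ and $T$; by the decomposition of Lemma~\ref{L:intervalsOnto} together with Lemma~\ref{L:almostdisj}, such an element of $J$ is incomparable to $u$ and to the right of $u$, while such an element of $I$ is incomparable to $u$ and to the left of $u$. Since the chain starts in $J-I$ and ends in $I-J$, it contains a cover $z\prec z'$ with $z$ to the right of $u$ and $z'$ to the left of $u$, both incomparable to $u$, which is impossible in a planar lattice: an element incomparable to both members of a comparable pair lies on the same side of both. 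Equivalently, and more directly (this is what the paper's terse Case 5 implicitly uses): unless $i\geq c_0$ or $j\geq c_0$, the element $i$ is to the left of $u$ and $j$ to the right of $u$, so planarity gives $u\leq i\jj j$; since $i\not\leq u$ we get $i\jj j>u$, and then $i\jj j\geq c_0$ by the assertion ``$x>u$ implies $x\geq c_0$'' from the proof of Lemma~\ref{L:intervalsOnto}(i). The missing ingredient in your write-up is this left--right argument; supplying it (rather than a new upper-adjacent pair) completes the $\ol{I}$--$\ol{J}$ case, and the rest of your proof stands.
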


\begin{proof}
Let $x  = \vv<a, \gg>$, $y = \vv<b,\gd> \in \ol{L}$. We show that $x \jj y \in \ol{L}$. We distinguish several cases.

Case 1. $\gg = \gd$. In this case, $a$ and $b$ are both in one $X$ of the four intervals $T$, $B$, $I$, or $J$ of $L$, so $a \jj b \in X\ci L$. It follows that $x\jj y\in \ol{L}$.

Case 2. $\gg = 0$ and $\gd = 1$; and symmetrically. Then $x \in B$ and $y \in T$ imply that $a \le u \le c_0 \le b$, so $a\jj b = b$ and $x \jj y = y \in \ol{T} \ci \ol{L}$.

Case 3. $\gg = 0$ and $\gd = \ga$; and symmetrically. Then $a \in B$, $b \in I$ so $a \jj b \in I$ and $x \jj y \in \ol{I} \ci \ol{L}$. Similarly for $\gd = \gb$.

Case 4. $\gg = 1$ and $\gd = \ga$; and symmetrically. Then $a \jj b \in T$ so $x \jj y \in \ol T \ci \ol{L}$. 

Case 5. $\gg = \ga$ and $\gd = \gb$. Then $x \in I$ and $y \in J$. So $a \jj b \in T$, and $x \jj y \in \ol T \ci \ol{L}$.
\end{proof}

\begin{lemma}\label{L:expcover}
Let $\vv<x, \gg>$, $\vv<y, \gd>\in \ol{L}$. Then $\vv<x, \gg> \prec\vv<y, \gd>$ in $\ol{L}$ if{}f one of the following two conditions holds:
\begin{enumeratei}
\item $\vv<x,\gg>\prec\vv<y,\gd>$ in $L\times C^2_2$;
\item $x$, $y$ is a bridge.
\end{enumeratei}
\end{lemma}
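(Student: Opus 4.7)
If $\vv<x,\gg> \prec \vv<y,\gd>$ is already a cover in $L\times C_2^2$, then since $\ol L$ is a subposet of $L\times C_2^2$ by Lemma~\ref{L:explattice}, no strict intermediate in $\ol L$ can exist. If $(x,y)$ is an $I$-bridge with $x\in B$, $y\in I$, $x\prec_L y$, then both endpoints lie in $\ol L$, and because $x\prec_L y$ and $0\prec\ga$ in $C_2^2$, the only candidate strict intermediates in the product interval $[\vv<x,0>,\vv<y,\ga>]$ are the two off-diagonal corners $\vv<y,0>$ and $\vv<x,\ga>$; neither lies in $\ol L$ by the disjointness of the four intervals (Lemma~\ref{L:intervalsOnto}(ii)). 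The $J$-bridge case is symmetric.

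\textbf{$(\Rightarrow)$.} Assume $\vv<x,\gg> \prec \vv<y,\gd>$ in $\ol L$ and split on $(\gg,\gd)$. If $\gg=\gd$, then $x,y$ both lie in a single interval of $\set{T,B,I,J}$, which is convex in $L$; any strict $L$-intermediate would lift to a strict $\ol L$-intermediate, so $x\prec_L y$ and we are in case~(i). For $\gg<\gd$ with $\gg\neq\gd$ there are five subcases. In $(0,\ga)$ (and symmetrically $(0,\gb)$) the target is an $I$-bridge: if $x\not\prec_L y$, pick a cover $w$ of $x$ with $w\le y$, $w<y$. By the partition in Lemma~\ref{L:intervalsOnto}(i), $w$ is in $B$, $I$, $T$, or $J$. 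The first two supply immediate strict intermediates $\vv<w,0>$ or $\vv<w,\ga>$. The $T$-branch forces $x=u$ and $w=c_0$ from the sandwich $x\le u\prec c_0\le w$ together with $x\prec_L w$; then $y\ge c_0$, so by Lemma~\ref{L:intervalsOnto}(iv) $y\in C$, and either $y=c_0$ (contradicting $x\not\prec_L y$ since $x=u\prec c_0$) or $y>c_0$ yields $\vv<c_0,\ga>\in\ol I$ as a strict intermediate. The $J$-branch is handled dually using $c_0=d_0$ and, if $w\neq c_0$, the join $w\jj v^+\in I$ as a strict intermediate. In $(0,1)$, squeezing by $\vv<u,0>$ and $\vv<c_0,1>$ forces $x=u$ and $y=c_0$, and then $\vv<c_0,\ga>\in\ol I$ is a strict intermediate, so this subcase cannot occur. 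In $(\ga,1)$ (symmetrically $(\gb,1)$), $x=y$ gives (i); else $x<y$, and using $I\ii T=C$ from Lemma~\ref{L:intervalsOnto}(iv), the intermediates $\vv<x,1>$ and $\vv<c_0,\ga>$ force $x\in I\setminus C$ with $x\parallel c_0$; then $y\mm c^+\in C$ lies strictly between $x$ and $y$, yielding $\vv<y\mm c^+,1>\in\ol T$ as a strict intermediate, a contradiction.

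\textbf{Main obstacle.} The most delicate branch is $w\in J$ in the $(0,\ga)$ subcase. Here I would combine Lemma~\ref{L:4cell4} (at most two covers of $u$), the dual roles of $v^+, c^+$ on the left and $w^+, d^+$ on the right, and $I\ii J=\set{c_0=d_0}$ from Lemma~\ref{L:almostdisj} to force either $w=c_0$ (collapsing into the $T$-branch) or the existence of $\vv<w\jj v^+,\ga>\in\ol I$ as a strict intermediate. With that in hand, the remaining verifications reduce to routine applications of Lemmas~\ref{L:intervalsOnto} and~\ref{L:almostdisj}.
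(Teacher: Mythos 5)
Your overall strategy is the same as the paper's: for the converse, show that the product interval of a bridge meets $\ol{L}$ only in its endpoints; for the forward direction, use the covering in $\ol{L}$ to exclude strict intermediates supplied by the decomposition $L = T \uu B \uu I \uu J$. Most of your branches check out (the $\gg=\gd$ case, the $(0,1)$ case, the $T$-branch, and the converse are essentially the paper's argument, worked out in more detail). But there is a genuine gap exactly where you flag the ``main obstacle'': the subcase $\gg=0$, $\gd=\ga$ with your chosen cover $w$ of $x$ lying in $J$. You propose to force either $w=c_0$ or that $\vv<w\jj v^+,\ga>$ is a strict intermediate, but no argument is given, and the second alternative can fail as stated: the element $w\jj v^+$ does lie in $I$ (since $v^+\le w\jj v^+\le y\le c^+$) and is strictly above $\vv<x,0>$, but all that is available on the other side is $w\jj v^+\le y$; if $w\jj v^+=y$, your candidate coincides with the top of the interval and yields no contradiction. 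Thus the crucial implication---that $\vv<x,0>\prec\vv<y,\ga>$ in $\ol{L}$ forces $x\prec y$ in $L$, i.e.\ that the pair is a bridge---is not established. What is missing is precisely an argument that an element of $J-\set{c_0=d_0}$ cannot sit strictly between an element of $B$ and an element of $I$ without producing a strict intermediate inside $\ol{B}\uu\ol{I}$.

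For comparison, the paper argues this direction by taking $z$ with $x\le z\prec y$ in $L$ and observing that $\vv<z,\gl>\in\ol{L}$ must lie in the two-element interval $[\vv<x,0>,\vv<y,\ga>]$ of $\ol{L}$, forcing $z=x$ and $\gl=0$; note that this tacitly requires $\gl\le\ga$, that is $z\in B\uu I$ (the possibility $z\in T$ can be absorbed via Lemma~\ref{L:intchains}: $c_0\le z<y\le c^+$ puts $z$ in $C\ci I$), which is the very point your $J$-branch must settle. So you have located the delicate spot correctly, but your proposal does not close it. Two smaller remarks: in the $(\ga,1)$ subcase your intermediate $\vv<y\mm c^+,1>$ is not strictly below $\vv<y,1>$ when $y\le c^+$; use $\vv<y\mm c^+,\ga>$ instead, which is strictly between in all cases because the second coordinates already differ. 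Everything else is a routine application of Lemmas~\ref{L:intervalsOnto} and~\ref{L:almostdisj}, as you say.
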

\begin{proof}
Let $\vv<x, \gg> \prec \vv<y, \gd>$ in $\ol{L}$. If $x \nin B$, then $y \nin B$, so $\vv<x, \gg>$, $\vv<y, \gd>\in \ol{I} \uu \ol{T}$ or $\vv<x,\gg>$, $\vv<y,\gd> \in \ol{J} \uu \ol{T}$; since $\ol{I} \uu \ol{T}$, $\ol{J} \uu \ol{T}$ are intervals of $L \times C^2_2$, we conclude that (i). If $x \in B$, then $\gg = 0$ and $\gd \in \set{0, \ga,\gb}$. If $\gd = 0$, then (i) holds. This leaves the case $\gd \in \set{\ga, \gb}$. By symmetry, we can assume that $\gd = \ga$. Let $z \in L$ satisfy $x \leq z \prec y$ in $L$; so $\vv<z, \gl> \in \ol{L}$, for some $\gl$. Since $z \prec y$ and $\vv<x, 0> \leq \vv<z, \gl> \prec \vv<y, \ga>$, these imply that $z = x$ and $\gl = 0$, and so $x \prec y$ in $L$ and so $x$, $y$ is a bridge.

Conversely, if (i) holds, then  $\vv<x, \gg> \prec\vv<y, \gd>$ in $\ol{L}$ is obvious. Let us assume that (ii) holds. By symmetry, we can assume that $\gd = \ga$. Then $x \in B$ and $y \in I$. In~$L \times C_2^2$, the interval $[\vv<x, \gg>, \vv<y, \gd>]$ is $\set{\vv<x, \gg>, \vv<x, \gd>, \vv<y, \gg>, \vv<y, \gd>}$, so the interval $[\vv<x, \gg>, \vv<y, \gd>]$ in $\ol{L}$ is $\set{\vv<x, \gg>, \vv<y, \gd>}$, that is, $\vv<x, \gg> \prec\vv<y, \gd>$ in $\ol{L}$.
\end{proof}

Now we state and prove the second crucial property of $\ol{L}$.

\begin{lemma}\label{L:expsm}
$\ol{L}$ is semimodular.
\end{lemma}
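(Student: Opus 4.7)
The plan is to verify semimodularity of $\ol{L}$ directly: for an arbitrary cover $\vv<x, \gg> \prec \vv<y, \gd>$ in $\ol{L}$ and arbitrary $\vv<z, \gl> \in \ol{L}$, I will show that $\vv<x, \gg> \jj \vv<z, \gl> \preceq \vv<y, \gd> \jj \vv<z, \gl>$ in $\ol{L}$. By Lemma~\ref{L:expcover}, there are exactly two kinds of covers to handle: (i) $\vv<x, \gg> \prec \vv<y, \gd>$ is already a cover in $L \times C_2^2$; or (ii) $x, y$ is a bridge.

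In case~(i), the product $L \times C_2^2$ is semimodular, being a product of two semimodular lattices, so $\vv<x, \gg> \jj \vv<z, \gl> \preceq \vv<y, \gd> \jj \vv<z, \gl>$ holds already in $L \times C_2^2$. Both joins lie in $\ol{L}$ by Lemma~\ref{L:explattice}, and a cover in $L \times C_2^2$ whose endpoints both lie in $\ol{L}$ stays a cover in $\ol{L}$, since any intermediate in $\ol{L}$ would also be intermediate in the ambient product.

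For case~(ii), by symmetry I assume $x, y$ is an $I$-bridge, so $x \in B$, $y \in I$, $x \prec y$ in $L$, $\gg = 0$, and $\gd = \ga$. I split on $\gl$. When $\gl = \ga$, Case~3 of Lemma~\ref{L:explattice} places $x \jj z, y \jj z \in I$, both images land inside the interval $\ol{I}$, and semimodularity of $L$ gives the desired $\preceq$. When $\gl = 1$, the relation $x \le u \prec c_0 \le z$ forces $x \jj z = z$; both images sit in $\ol{T}$ and again $L$'s semimodularity suffices. When $\gl = 0$, Lemma~\ref{L:explattice} puts $x \jj z \in B$ and $y \jj z \in I$; these are distinct since $B$ and $I$ are disjoint, so Lemma~\ref{L:special} forces them to form an $I$-bridge, and Lemma~\ref{L:expcover}~(ii) promotes this to the cover $\vv<x \jj z, 0> \prec \vv<y \jj z, \ga>$ in $\ol{L}$.

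The main obstacle is the case $\gl = \gb$, where $z \in J$: Lemma~\ref{L:explattice} places $x \jj z \in J$ (Case~3) and $y \jj z \in T$ (Case~5), so naively the two images inhabit different intervals of $\ol{L}$. The resolution is a second application of Lemma~\ref{L:special}: either $x \jj z = y \jj z$, or they form an $I$-bridge; but a new $I$-bridge would require $x \jj z \in B$, contradicting $B \ii J = \emptyset$ from Lemma~\ref{L:intervalsOnto}~(ii). Hence $x \jj z = y \jj z$, this common value $d$ lies in $J \ii T = D$ by Lemma~\ref{L:intervalsOnto}~(iv), and the two joins become $\vv<d, \gb>$ and $\vv<d, 1>$---a cover in $L \times C_2^2$, hence in $\ol{L}$. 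The $J$-bridge situation is handled symmetrically, completing the proof.
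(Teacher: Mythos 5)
Your proof is correct and takes essentially the same route as the paper: the cover dichotomy of Lemma~\ref{L:expcover}, with semimodularity of $L \times C_2^2$ plus the join-subsemilattice property handling case (i), and Lemma~\ref{L:special} handling the bridge case (ii). The only difference is that you spell out, subcase by subcase on $\gl$, the verification that the paper compresses into the single sentence ``semimodularity follows from Lemma~\ref{L:special}.''
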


\begin{proof}
Let $\vv<x, \gg>$, $\vv<y, \gd>$, $\vv<z, \gl> \in \ol{L}$. Let $\vv<x, \gg> \prec \vv<y, \gd>$ in $\ol{L}$, in particular, $\gg \leq \gd$. Either condition (i) or condition (ii) of Lemma~\ref{L:expcover} holds. Without loss of generality, we also assume that $\vv<x, \gg> \leq \vv<z, \gl>$; in particular, $\gg \leq \gl$.

Let condition (i) of Lemma~\ref{L:expcover} hold. Then $\vv<x, \gg>\prec \vv<y, \gd> \in L\times C^2_2$. Since $L \times C^2_2$ is semimodular and $\ol{L}$ is a join-subsemilattice of $L\times C^2_2$, it follows that $\vv<x, \gg> \jj \vv<z, \gl> \preceq \vv<y, \gd> \jj \vv<z, \gl>$ in $\ol{L}$.

Let condition (ii) of Lemma~\ref{L:expcover} hold. Then semimodularity follows from Lemma~\ref{L:special}.
\end{proof}

\begin{lemma}\label{L:expplanar}
$\ol{L}$ is planar.
\end{lemma}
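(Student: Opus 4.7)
The plan is to exhibit a planar diagram of $\ol{L}$ by performing surgery on the fixed planar diagram of $L$: cut open each of the seam chains $C_U$ and $D_U$ and insert a ladder of $4$-cells into the gap.

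By Lemma~\ref{L:intervalsOnto}, the fixed planar diagram of $L$ visually decomposes into sub-diagrams for the four intervals $B$, $I$, $J$, $T$, with $B$ at the bottom, $T$ at the top, and $I$, $J$ on the upper left and right; $C_U$ is the interior seam between $I$ and $T$, and $D_U$ is the seam between $J$ and $T$. My first step is to argue that every cover of $L$ incident to $C_U$ either lies inside $C_U$, or enters from the $I$-side as a cover $x_i \prec c_i$ of some $4$-cell $C^U_i$ from Definition~\ref{D:leftchain}, or leaves $C_U$ upward into $T \setminus C_U$ with source some $c_i$ and target strictly to the right of $c_{i+1}$; this is exactly what Lemma~\ref{L:sidetops} together with the construction of $C_U$ provide. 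The symmetric statement holds for $D_U$. Consequently each seam can be opened into two parallel copies without forcing any existing edge of the diagram to cross it.

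I then duplicate each $c_i$ into an $I$-copy and a $T$-copy, each $d_j$ into a $J$-copy and a $T$-copy, reassigning incident edges to the appropriate copy, and label the elements of $B$, $I$, $J$, $T$ with second coordinates $0$, $\ga$, $\gb$, $1$, respectively. By Lemma~\ref{L:expcover}(i), the resulting drawing already realizes every cover of $\ol{L}$ except the ``rungs'' $\vv<c_i, \ga> \prec \vv<c_i, 1>$ and $\vv<d_j, \gb> \prec \vv<d_j, 1>$ (the bridges of Lemma~\ref{L:expcover}(ii) being the old $B$-to-$I$ and $B$-to-$J$ edges of $L$, merely relabeled). I add each rung as a short segment across the corresponding opened strip; since these strips have empty interior, no crossings arise, and we obtain the desired planar diagram.

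The main obstacle is the separation step: verifying that each seam really can be opened without being crossed transversely by some stray edge of the original diagram. This is a geometric assertion about the fixed planar diagram of $L$, and it is precisely what Lemma~\ref{L:sidetops} is tailored to deliver; once that is in hand, everything else is local and routine.
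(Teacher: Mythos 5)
Correct, and essentially the paper's own proof: the paper also obtains a planar diagram of $\ol{L}$ by assembling the diagrams of the four blocks as sketched in Figure~\ref{Fi:extension} and then adding the rungs joining the two copies of each $c_i$ and $d_j$, with the absence of crossings justified exactly as you argue---the construction of $C$ places $C$ at the bottom left of $T$, while Lemma~\ref{L:sidetops} places $C$ at the upper right of $I$, and symmetrically for $D$. Your seam-opening surgery on the fixed diagram of $L$ is simply a more explicit rendering of that same figure-plus-Lemma~\ref{L:sidetops} argument.
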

\begin{proof}
A sketch of a planar diagram of $\ol L$ is shown in Figure~\ref{Fi:extension}. We also have to connect all $\vv<c_i, \ga>$ with $\vv<c_i, 1>$, for $1 \leq i \leq k$ and all $\vv<d_i, \gb>$ with $\vv<d_i, 1>$, for $1 \leq i \leq~l$. By the construction of $C$ we know that $C$ is in the bottom left of $T$, by Lemma~\ref{L:sidetops} we have that $C$ is in the upper right of $I$ as represented in our sketch. Similarly for $D$.
\end{proof}

\begin{lemma}\label{L:expslim}
$\ol{L}$ is slim.
\end{lemma}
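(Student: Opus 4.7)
My plan is to show $\ol{L}$ is slim by verifying that no element of $\ol{L}$ has three or more upper covers: together with the semimodularity of $\ol{L}$ from Lemma~\ref{L:expsm}, this rules out a cover-preserving $M_3$ sublattice. For a fixed $\vv<x,\gg> \in \ol{L}$, I would use Lemma~\ref{L:expcover} to classify its upper covers as either (i) covers in $L \times C_2^2$ that land in $\ol{L}$, or (ii) bridge covers, and then split on $\gg \in \set{0, \ga, \gb, 1}$.

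The cases $\gg = 1, \ga, \gb$ go through directly. If $\gg = 1$, then $x \in T$ and every cover comes from a cover of $x$ inside the interval $T$ of $L$, at most two by slimness of $L$. If $\gg = \ga$ (symmetrically $\gb$), no bridge starts at $x \in I$, and covers are $\vv<y,\ga>$ with $y$ a cover of $x$ in $I$, together with $\vv<x,1>$ whenever $x \in I \ii T = C$. The key observation for $x = c_i \in C$ with $i < k$ is that $c_{i+1} = (c_i)^*_L$ lies in $C \ci I$, while any distinct second cover $(c_i)^*_R$ lies in $T \setminus C$ and hence outside $I$ (using $I \ii T = C$); so $\vv<c_i,\ga>$ has exactly the two covers $\vv<c_{i+1},\ga>$ and $\vv<c_i,1>$. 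For $x \in I \setminus C$ the bound is immediate.

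The delicate case is $\gg = 0$. For $x \in B = [0,u]$, covers of $\vv<x,0>$ are $\vv<y,0>$ with $y \in B$ a cover of $x$ in $L$, plus bridge covers $\vv<y,\ga>$ or $\vv<y,\gb>$ for $y \in I$ or $y \in J$. Each cover $y$ of $x$ contributes exactly one lift except $y = c_0 \in I \ii J$, which contributes both $\vv<c_0,\ga>$ and $\vv<c_0,\gb>$. Since the only $x \in B$ with $c_0$ as an upper cover is $x = u$, the whole argument hinges on showing that $u$'s unique upper cover in $L$ is $c_0$; this is where I expect the main obstacle.

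To close this gap I would argue by contradiction: if $u$ had a second cover $u' \neq c_0$, then by symmetry $u' \in I$, and the $4$-cell $\set{u, u', c_0, u' \jj c_0}$ forces $u' \jj c_0 = (c_0)^*_L = c_1$. Maximality of $U$ together with Lemma~\ref{L:4cell5} rules out a third lower cover of $c_1 > 1_U$ (such a cover would yield an upper-adjacent pair above $U$), so $c_1$'s only lower covers are $u'$ and $c_0$. Comparing with the cell $C^U_1 = \set{x_0, x_1, c_0, c_1}$ then forces $u' = x_1$ and $u = x_0$. But this puts the edge $u \to c_0$ simultaneously on the right chain of $C^U_1$, on the right chain of $A$, and on the left chain of $B$; then both $A$ and $C^U_1$ occupy the region immediately to the left of this edge in the planar diagram, which is impossible. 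Thus $u$'s unique upper cover is $c_0$, so $\vv<u,0>$ has exactly the two covers $\vv<c_0,\ga>, \vv<c_0,\gb>$, and every other $\vv<x,0>$ trivially has at most two covers. This completes the verification that $\ol{L}$ is slim.
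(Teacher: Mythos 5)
The paper's own ``proof'' of this lemma is the single word ``Obvious,'' so you are supplying an argument where the paper gives none. Your skeleton is sound: a cover-preserving $M_3$ forces an element with three upper covers, so it suffices to show every element of $\ol{L}$ has at most two covers (semimodularity of $\ol{L}$ is not even needed for this reduction). The case analysis via Lemma~\ref{L:expcover} is correct: for $\gg=1,\ga,\gb$ the bound follows from Lemma~\ref{L:4cell4} applied to $L$ together with $I\ii T=C$, and for $\gg=0$ the only possible trouble spot is $\vv<u,0>$, since $c_0$ is the unique element of $I\ii J$ and $u$ is the only element of $B$ covered by $c_0$. The fact you reduce everything to---that $c_0$ is the \emph{only} cover of $u$ in $L$---is indeed true and is exactly what the paper itself uses later, in the proof of Lemma~\ref{L:expfewerpairs}, when it asserts that $\vv<u,0>$ has two covers.

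Your argument for that fact, however, has a soft spot: the step ``the $4$-cell $\set{u,u',c_0,u'\jj c_0}$ forces $u'\jj c_0=(c_0)^*_L=c_1$'' is asserted, not proved. Nothing said so far excludes $u'\jj c_0=(c_0)^*_R$; ruling that out needs the same tools you use afterwards (if $u'\jj c_0=d_1$, then $d_1$ covers $u'$, $c_0$ and the right atom $y_1$ of $D^U_1$, so Lemma~\ref{L:4cell5} and maximality of $U$ give a contradiction, and if the chain $D$ is trivial a planarity argument is needed instead). Also, $c_1$ and $C^U_1$ need not exist: $k=0$ is possible (already for $S_7$ one has $k=l=0$), so the comparison with $C^U_1$ is not always available; in that branch one should instead observe that a cell $\set{u,u',c_0,(c_0)^*_L}$ with $u'$ to the left of $c_0$ would let the chain $C$ of Definition~\ref{D:leftchain} be extended past $c_0$, contradicting its termination. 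All of this is repairable, and in fact a shorter argument bypasses $C$, $c_1$ and maximality entirely: in the fixed planar diagram the edge $u\prec c_0=1_U$ has the cell $A$ immediately on its left and the cell $B$ immediately on its right, so any second upward edge leaving $u$ would enter the interior of $A$ or of $B$ and force $u'$ (or an edge crossing) inside a cell, which is impossible. With that repair your proof is complete, and it is a reasonable way to make precise what the paper dismisses as obvious.
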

\begin{proof}
Obvious.
\end{proof}

\begin{lemma}\label{L:expjh}
The map $\vv<a,\gamma>\gf  = a$ is a cover-preserving join-homomorphism such that $\ol{L}\gf =L$.
\end{lemma}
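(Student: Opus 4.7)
The plan is to verify three things about $\gf$: that it is surjective onto $L$, that it is a join-homomorphism, and that it is cover-preserving. Each reduces to a direct application of a previously established lemma, so no substantial new argument is needed.

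Surjectivity is immediate from Lemma~\ref{L:intervalsOnto}(i). Since $L = T \uu B \uu I \uu J$, every $a \in L$ appears as the first coordinate of some element of $\ol{L}$, namely one of $\vv<a,1>$, $\vv<a,0>$, $\vv<a,\ga>$, $\vv<a,\gb>$, chosen according to which of the four intervals contains $a$; applying $\gf$ recovers $a$.

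For the join-homomorphism property, I would appeal directly to Lemma~\ref{L:explattice}: $\ol{L}$ is a join-subsemilattice of $L \times C_2^2$, so the join of any two elements of $\ol{L}$ coincides with their join in the product. The first-coordinate projection $L \times C_2^2 \to L$ is a lattice homomorphism, and $\gf$ is simply its restriction to $\ol{L}$, so $\gf$ inherits the join-homomorphism property.

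For cover-preservation, I would invoke Lemma~\ref{L:expcover}. Suppose $\vv<x,\gg> \prec \vv<y,\gd>$ in $\ol{L}$ and $x \neq y$ (if $x = y$ there is nothing to check, since $\gf$ collapses the pair). In case (i) of Lemma~\ref{L:expcover} the cover already holds in $L \times C_2^2$, which together with $x \neq y$ forces $x \prec y$ in $L$; in case (ii) the pair $x$, $y$ is, by definition, a bridge, so $x \prec y$ in $L$. In either case $\vv<x,\gg>\gf = x \prec y = \vv<y,\gd>\gf$, as required. The only point requiring care is remembering that the definition of cover-preserving is conditioned on $x\gf \neq y\gf$, so covers of the form $\vv<x,\gg> \prec \vv<x,\gd>$ need no verification; everything else is bookkeeping.
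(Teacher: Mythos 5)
Your argument is correct and follows essentially the same route as the paper: $\gf$ is the restriction of the first projection to the join-subsemilattice $\ol{L}$ of $L \times C_2^2$ (Lemma~\ref{L:explattice}), surjectivity comes from the decomposition $L = T \uu B \uu I \uu J$, and cover-preservation follows from Lemma~\ref{L:expcover}. You merely spell out the surjectivity and the case analysis for covers more explicitly than the paper does; no difference in substance.
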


\begin{proof}
The first projection of $L \times C^2_2$ onto $L$ is a join-homomorphism such that $(L \times C^2_2)\gf = L$ and $\ol L$ is a join-subsemilattice of $L \times C^2_2$. $\gf$ is cover-preserving by Lemma~\ref{L:expcover}.
\end{proof}

\section{The Expansion Theorem}\label{S:expansion}
The following lemma leads us to the Expansion Theorem.

\begin{lemma}\label{L:expfewerpairs}
$\ol{L}$ contains one fewer upper-adjacent pairs than $L$.
\end{lemma}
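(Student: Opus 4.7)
My plan is to show that $U$ itself fails to lift to an upper-adjacent pair of $\ol L$, while every other upper-adjacent pair of $L$ corresponds uniquely to one of $\ol L$ via the projection $\gf$ of Lemma~\ref{L:expjh}.

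First I would check that $U$ is destroyed. Let $v$, $u$, $w$ be the atoms of $U$, with $u$ the common atom of the two cells. The left cell of $U$ lifts in $\ol L$ to the $4$-cell $\set{\vv<v \mm u,0>,\vv<v,\ga>,\vv<u,0>,\vv<c_0,\ga>}$ with unit $\vv<c_0,\ga>\in\ol I$, and the right cell lifts symmetrically to a $4$-cell with unit $\vv<c_0,\gb>\in\ol J$. Since $\vv<c_0,\ga>\neq\vv<c_0,\gb>$, these two cells are not upper-adjacent in $\ol L$. By Lemma~\ref{L:expcover}, the only covers of $\vv<c_0,1>$ from below in $\ol L$ are $\vv<c_0,\ga>$ and $\vv<c_0,\gb>$, so the unique $4$-cell seated at $\vv<c_0,1>$, namely $\set{\vv<u,0>,\vv<c_0,\ga>,\vv<c_0,\gb>,\vv<c_0,1>}$, also carries no upper-adjacent pair.

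Next I would rule out new upper-adjacent pairs along the ladder chains $C$ and $D$. For each $i\geq 1$, the maximality of $U$ together with Lemma~\ref{L:4cell5} force $c_i$ to have exactly two covers from below in $L$: $c_{i-1}$ and $x_i$ from the $4$-cell $C_i^U$ of Definition~\ref{D:leftchain}. Applying Lemma~\ref{L:expcover}, each of $\vv<c_i,\ga>$ and $\vv<c_i,1>$ then has exactly two covers from below in $\ol L$, so only one $4$-cell sits at each and no upper-adjacent pair forms along $C$; the argument for $D$ is symmetric. Consequently, every upper-adjacent pair $V\neq U$ of $L$ has its unit in the interior of precisely one of $T\setminus C$, $B\setminus\set{u}$, $I\setminus C$, $J\setminus D$, and therefore lifts to a unique upper-adjacent pair in the corresponding copy $\ol T$, $\ol B$, $\ol I$, or $\ol J$. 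Conversely, every upper-adjacent pair of $\ol L$ projects under $\gf$ to an upper-adjacent pair of $L$ distinct from $U$, yielding the required bijection.

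The main obstacle is this second step---specifically, verifying that no mixed $4$-cell of $\ol L$ (one using both product-covers and bridge-covers in the sense of Lemma~\ref{L:expcover}(i) and (ii)) contributes an unexpected upper-adjacent pair. The key point is that a bridge contributes only a single new cover relation linking $\ol B$ to $\ol I$ or $\ol J$, so outside the preimages of $C\uu D\uu\set{c_0}$ the cover structure of $\ol L$ matches that of $L$, which preserves the bijection.
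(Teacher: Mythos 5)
Your plan runs in the opposite direction from the paper's (the paper projects pairs of $\ol L$ down to $L$; you lift pairs of $L$ up to $\ol L$), which by itself would be fine, but both halves of your claimed bijection rest on unproved---and partly false---claims. The assertion that every upper-adjacent pair $V\neq U$ of $L$ has its unit in the interior of precisely one of $T\setminus C$, $B\setminus\set{u}$, $I\setminus C$, $J\setminus D$ does not follow from maximality: maximality only forbids pairs with $1_V>1_U$, so there may well be further pairs with $1_V=1_U=c_0$ (this happens whenever $c_0$ has four or more lower covers), and $c_0$ lies in none of your four sets; such pairs lift to pairs at $\vv<c_0,\ga>$ or $\vv<c_0,\gb>$ through the bridge at $\vv<u,0>$, not into any single copy. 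Likewise, a pair whose unit lies in $I\setminus C$ can have a cell whose zero or atom lies in $B$, so its image in $\ol L$ uses bridge covers and is not contained in $\ol I$; hence ``lifts to a unique upper-adjacent pair in the corresponding copy'' is not correct as stated. Your closing paragraph acknowledges exactly this difficulty, but the proposed fix (``outside the preimages of $C\uu D\uu\set{c_0}$ the cover structure of $\ol L$ matches that of $L$'') is precisely the statement that needs proof, not an argument for it.

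Moreover, the direction you dispatch in one sentence---that every upper-adjacent pair of $\ol L$ projects under $\gf$ to an upper-adjacent pair of $L$ distinct from $U$, injectively---is the inequality that is actually needed (it gives that $\ol L$ has strictly fewer pairs, which is what bounds $k$ in Theorem~\ref{T:002}), and it is where the paper's work lies: one must show that the projection of a pair is again a pair (ruling out units of the form $\vv<c_i,1>$, $\vv<d_j,1>$, where the projected cells degenerate---your chain analysis covers part of this---and using that $\gf$ restricts to isomorphisms on $\ol B\uu\ol I$, $\ol B\uu\ol J$, and $\ol T$), that distinct pairs of $\ol L$ have distinct projections, and that no pair of $\ol L$ projects onto $U$. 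The paper obtains the last point from the observation that $\vv<u,0>$, the unique preimage of $u$, has two upper covers and therefore cannot be the interior atom of any upper-adjacent pair; your first paragraph only checks that the two particular lifted cells of $U$ fail to be upper-adjacent, which does not exclude other cells of $\ol L$ combining into a pair that projects to $U$. Note finally that the paper's injection argument only yields ``at least one fewer,'' which is all that is used later; your detailed lifting work addresses the reverse inequality, the one that is not needed.
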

\begin{proof}
Let us define a function $\gy$ mapping upper-adjacent pairs of $\ol{L}$ to upper-adjacent pairs of $L$: for an upper-adjacent pair in $\ol L$, let $V\gy = V\gf$.

To show $\gy$ is well defined let $V$ be an upper adjacent pair in $\ol{L}$ to show that $V\gy$ is an upper-adjacent pair in $L$ distinguish the following cases:

Case 1. $1_V\leq\vv<c^+,\ga>$; and symmetrically. Then $V\gy\ci I\uu B$ is an upper-adjacent pair in $L$ since when restricted to $I\uu B$ the map $\gy$ is exactly the first projection and an isomorphism.

Case 2. $1_V=\vv<c_i,1>$ for some $i\leq k$; and symmetrically. Then $1_V$ covers exactly two elements, a contradiction.

Case 3. $1_V>\vv<c_i,1>$ for all $i\leq k$ and $1_V>\vv<d_i,1>$ for all $i\leq l$. Then for all $y\in V$ we have $y\geq c_0$. So $V\ci \ol{T}$ and $V\gy\ci T$ is an upper-adjacent pair in $L$ since when restricted to $T$ the map $\gy$ is exactly the first projection and an isomorphism.

To show $\gy$ is one-to-one, let $V,W\ci \ol{L}$ be two upper-adjacent pairs with interior atoms $\vv<a,\gg>$, $\vv<b,\gd>$ respectively such that $V\gf=W\gf$. So $a=\vv<a,\gg>\gf=\vv<b,\gd>\gf=b$. If $V$, $W$ are distinct then $a=b\in T$ which contradicts the statement of $U$ is maximal in the construction of $\ol{L}$.

$\set{u}\gf^{-1}=\vv<u,0>$ has two distinct covers and cannot be the interior atom of any upper-adjacent pair. Thus $\ol{L}$ contains at least one fewer upper-adjacent pairs than~$L$.
\end{proof}

Now we are ready for the Expansion Theorem.

\begin{theorem}\label{T:002}
Let $L$ be a slim semimodular lattice. There exists a planar distributive lattice $D$ and a cover-preserving join-homomorphism $\gf$ such that $L = D\gf$.
\end{theorem}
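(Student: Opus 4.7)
The plan is induction on the number $n(L)$ of upper-adjacent pairs of $4$-cells in $L$. The base case $n(L) = 0$ is handled directly: by Lemma~\ref{L:sideAdj}, a slim semimodular lattice with no upper-adjacent pairs is modular, and by Lemma~\ref{L:planarmod}, a slim (planar) modular lattice is distributive. So we may take $D = L$ with $\gf$ the identity map.

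For the inductive step, assume the theorem for every slim semimodular lattice having fewer than $n$ upper-adjacent pairs, and let $L$ satisfy $n(L) = n \geq 1$. The One-step Expansion Theorem (Theorem~\ref{T:exptheorem}), whose construction occupied all of Section~\ref{S:oneexpansion}, supplies a slim planar semimodular lattice $\ol{L}$ with $n(\ol{L}) = n - 1$ together with a surjective cover-preserving join-homomorphism $\gf_0 \colon \ol{L} \to L$. Applying the inductive hypothesis to $\ol{L}$ yields a planar distributive lattice $D$ and a surjective cover-preserving join-homomorphism $\gf_1 \colon D \to \ol{L}$. Define $\gf = \gf_1\gf_0 \colon D \to L$ (in the paper's postfix convention).

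All that remains is to verify that $\gf$ has the required properties. Surjectivity is immediate from the surjectivity of $\gf_0$ and $\gf_1$, and a composition of join-homomorphisms is a join-homomorphism. For the cover-preserving property, I use the equivalent formulation ``$x \preceq y$ implies $x\gf \preceq y\gf$'': if $x \preceq y$ in $D$, then $x\gf_1 \preceq y\gf_1$ in $\ol{L}$, and hence $x\gf_1\gf_0 \preceq y\gf_1\gf_0$ in $L$. No genuine obstacle arises, since the entire substantive content of the Expansion Theorem has already been packaged into Theorem~\ref{T:exptheorem}; the induction merely iterates the one-step expansion finitely many times until the number of upper-adjacent pairs reaches zero, at which point the base case converts slim and modular into slim and distributive.
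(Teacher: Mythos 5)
Your induction on the number of upper-adjacent pairs is just a repackaging of the paper's own argument, which iterates the One-step Expansion Theorem until no upper-adjacent pairs remain, concludes distributivity from slim plus modular, and composes the cover-preserving join-homomorphisms. The proposal is correct and essentially identical in substance to the paper's proof.
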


\begin{proof}
Apply the One-step Extension Theorem, to obtain a sequence of lattices $L = L_0,\dots,L_k = D$ and a sequence of cover-preserving join-homomorphisms $\gf_1,\dots,\gf_k$ such that for all $i$ we have $L_{i-1} = L_i\gf_i$ and $L_k$ has no pairs of upper-adjacent $4$-cells. $k\le$ the number of upper-adjacent pairs of $4$-cells in $L$. Since $L_k = D$ has no upper-adjacent $4$-cells it is modular and since it's also slim it is distributive. Let $\gf = \gf_1\dots \gf_k$ which is cover-preserving.
\end{proof}

\section{An example}\label{S:eg}
Figure~\ref{Fi:semimod} shows the smallest example of our construction. 

To provide a less trivial example, let $L$ be the lattice with the planar diagram on the left of Figure~\ref{Fi:eg2-1}. This lattice contains two upper-adjacent pairs.

\begin{figure}[htb]
\centerline{\scalebox{.8}{\includegraphics{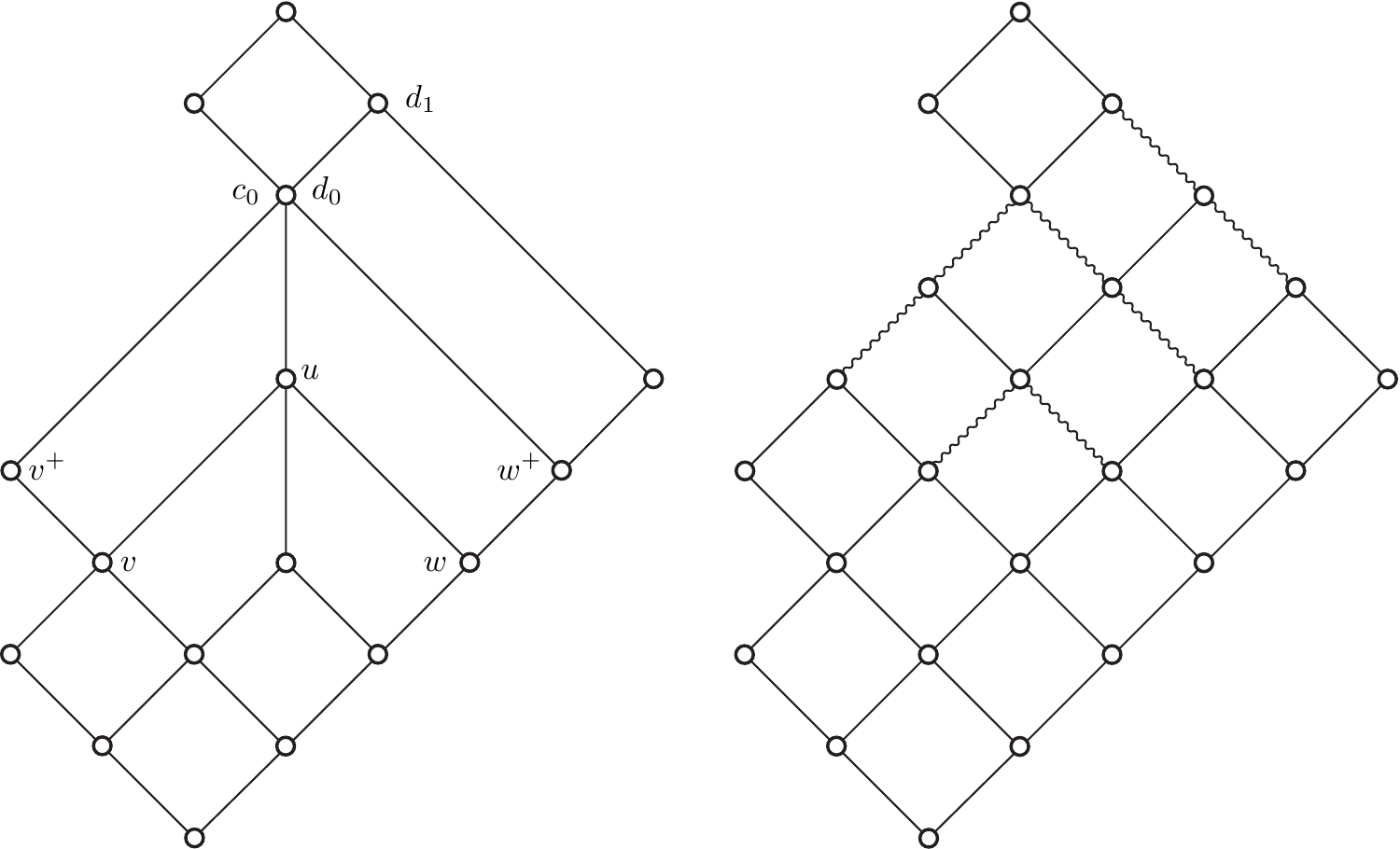}}}
\caption{A lattice with two upper-adjacent pairs and its extension.}\label{Fi:eg2-1}
\end{figure}

The expansion is the lattice on the right of Figure~\ref{Fi:eg2-1}; the classes of the congruence kernel of the cover-preserving join-homomorphism are marked by wavy lines.

\section{Conclusion}\label{S:conclusion}

Using the above results we have the following theorem:

\begin{theorem}\label{T:planarsm}
A planar semimodular lattice can be obtained from the direct product of two finite chains in the following three steps:

\begin{enumerate}[\upshape (1)]
\item Remove a left and a right corner \tup{(}possibly empty\tup{)} of the direct product of the chains to obtain a planar distributive lattice $D$.
\item Apply a cover-preserving join-homomorphism to $D$.
\item Add doubly-irreducible elements to the interiors of $4$-cells.
\end{enumerate}
\end{theorem}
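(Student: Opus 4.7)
The plan is to reverse the three construction steps in order $(3)$, $(2)$, $(1)$, each time invoking a previously established result. Given an arbitrary planar semimodular lattice $L$, first perform $1$-step slimmings repeatedly until none is possible; call the resulting lattice $L'$. By iterated application of Lemma~\ref{L:slimfat}, $L'$ is semimodular, and by construction $L'$ is slim. To undo this reduction, note that each $1$-step slimming removes the middle atom of a cover-preserving $M_3$; that element is doubly irreducible in $L$, and its removal turns the $M_3$ into a $4$-cell of the slimmed lattice. Thus $L$ is recovered from $L'$ by adding doubly irreducible elements into the interiors of certain $4$-cells of $L'$, which is precisely step~$(3)$.

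Next, apply the Expansion Theorem (Theorem~\ref{T:002}) to the slim semimodular lattice $L'$: there exists a planar distributive lattice $D$ and a cover-preserving join-homomorphism $\gf$ with $L' = D\gf$. This realizes step~$(2)$. Finally, by Lemma~\ref{L:planardistr}, the planar distributive lattice $D$ is itself obtained from the direct product $C \times E$ of two finite chains by removing a left corner and a right corner (each possibly empty), which is step~$(1)$. Composing these three constructions in the order $(1), (2), (3)$ recovers $L$ from a direct product of two finite chains, proving the theorem.

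The main obstacle is largely bookkeeping rather than new mathematical content: one must verify that ``adding doubly irreducible elements to the interiors of $4$-cells'' really is the inverse of iterated slimming, i.e., that the elements removed by successive slimmings correspond bijectively to doubly irreducible elements that can be reinserted without ambiguity, and that the reinsertion preserves planarity and semimodularity. All the substantial content --- the existence of the distributive lattice $D$ together with the cover-preserving join-homomorphism $\gf$, and the corner-removal description of planar distributive lattices --- has already been carried out in Theorem~\ref{T:002} and Lemma~\ref{L:planardistr}, so the final proof reduces to stitching these results together along the three-step template.
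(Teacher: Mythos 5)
Your proposal is correct and follows essentially the same route as the paper: the paper gives no separate argument for Theorem~\ref{T:planarsm} beyond ``using the above results,'' and your chain --- slim $L$ via Lemma~\ref{L:slimfat} (reversing step (3) by re-inserting the doubly irreducible ``eyes''), apply Theorem~\ref{T:002} for step (2), and Lemma~\ref{L:planardistr} for step (1) --- is precisely the intended composition.
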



\begin{thebibliography}{9}

\bibitem{GLT2}
G. Gr\"atzer,
General Lattice Theory, second edition. New appendices
by the author with B.\,A. Davey, R. Freese, B. Ganter, M. Greferath,
P. Jipsen, H.\,A. Priestley, H. Rose, E.\,T. Schmidt, S.\,E.
Schmidt, F. Wehrung, and R. Wille.
Birkh\"auser Verlag, Basel, 1998. xx+663 pp. ISBN: 0-12-295750-4, ISBN:
3-7643-5239-6.
\emph{Softcover edition,} Birkh\"auser Verlag, Basel--Boston--Berlin, 
2003.
ISBN: 3-7643-6996-5.

\bibitem{CLFL}
G. Gr\"atzer,
The Congruences of a Finite Lattice, A \emph{Proof-by-Picture} Approach.
Birkh\"auser Boston, 2005. xxiii+281 pp.\\ 
ISBN: 0-8176-3224-7.

\bibitem{KR75}
D. Kelly and I. Rival, 
\emph{Planar lattices,}
Canad. J. Math. \tbf{27} (1975), 636--665. 


\bibitem{sM43}
S. MacLane,  
\emph{A conjecture of Ore on chains in partially ordered sets,} 
Bull. Amer. Math. Soc. \tbf{49} (1943), 567--568.
09.1X

\bibitem{oO43}
O. Ore, 
\emph{Chains in partially ordered sets,} 
Bull. Amer. Math. Soc. \tbf{49} (1943), 558--566.

\end{thebibliography}
\end{document}